\newcommand{\mbn}{\mathbb{N}}
\newcommand{\mfm}{\mathfrak{m}}
\newcommand{\mfc}{\mathfrak{c}}
\newcommand{\mfn}{\mathfrak{n}}
\newcommand{\mfq}{\mathfrak{q}}
\newcommand{\mfp}{\mathfrak{p}}
\newcommand{\fbp}[1]{\left[ #1\right]}
\newcommand{\im}{\operatorname{im}}
\newcommand{\fte}{\operatorname{Fte}}
\newcommand{\spec}{\operatorname{Spec}}
\newcommand{\Max}{\operatorname{Max}}
\newcommand{\Min}{\operatorname{Min}}
\theoremstyle{definition}
\newtheorem{sthm}{Theorem}[section]
\newtheorem{sdef}[sthm]{Definition}
\newtheorem{slem}[sthm]{Lemma}
\newtheorem{sprop}[sthm]{Proposition}
\newtheorem{scor}[sthm]{Corollary}
\newtheorem{srmk}[sthm]{Remark}
\newtheorem{sex}[sthm]{Example}
\newtheorem*{mthm}{Theorem}
\renewcommand{\phi}{\varphi}
\begin{document}
\title{Counting geometric branches via the Frobenius map and $F$-nilpotent singularities}

\author[Hailong Dao]{Hailong Dao}
\address{Department of Mathematics, University of Kansas, 405 Snow Hall, 1460 Jayhawk Bvld, Lawrence, KS 66045, USA}
\email{hdao@ku.edu}
\author[Kyle Maddox]{Kyle Maddox}
\address{Department of Mathematical Sciences, University of Arkansas, SCEN 309, 850 West Dickson Street, Fayetteville, Ar, 72701, USA}
\email{kmaddox@uark.edu}
\author[Vaibhav Pandey]{Vaibhav Pandey}
\address{Department of Mathematics, Purdue University, 150 N University St., West Lafayette, IN~47907, USA}
\email{pandey94@purdue.edu}

\begin{abstract}
We give an explicit formula to count the number of geometric branches of a curve in positive characteristic using the theory of tight closure. This formula readily shows that the property of having a single geometric branch characterizes $F$-nilpotent curves. Further, we show that a reduced, local $F$-nilpotent ring has a single geometric branch; in particular, it is a domain. Finally, we study inequalities of Frobenius test exponents along purely inseparable ring extensions with applications to $F$-nilpotent affine semigroup rings.   
\end{abstract}

\keywords{$F$-nilpotent rings, geometric branches, integral closure, weak normalization}
\subjclass{13A35 (Primary) 13D45, 13B40 (Secondary)}

\maketitle

\section{Introduction}

The number of geometric branches of a local ring $(R,\mfm)$ is the number of minimal primes of its strict henselization. Studying the strict henselization of a ring is important to understand its geometry. We can view the strict henselization as the most complete geometric realization of a ring, where no additional elements can arise as roots of monic polynomials or from the separable closure of the residue field $R/\mfm$. We recall some basic facts about the strict henselization and geometric branches of a local ring in \Cref{subsection:sh and geo branches}.

In this paper, we give a formula to count the number of geometric branches of an excellent, reduced, local ring of dimension one in positive prime characteristic. 

\begin{mthm}[\Cref{thm: counting branches main thm}]
Let $(R,\mfm,k)$ be an excellent, reduced local ring of dimension one and of prime characteristic $p>0$. Further, let $(S,\mfn,\ell)$ be its weak normalization inside its total ring of quotients. Let $b(R)$ be the number of geometric branches of $R$. Then \[
\dim_k 0^*_{H^1_\mfm(R)}/0^F_{H^1_\mfm(R)} = [\ell:k](b(R)-1).
\] In particular, if the field $k$ is perfect, we have $b(R)=\dim_k 0^*_{H^1_\mfm(R)}/0^F_{H^1_\mfm(R)} + 1.$
\end{mthm}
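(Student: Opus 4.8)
The plan is to compute both sides of the claimed equality by passing through the weak normalization $S$ and exploiting the long exact sequence in local cohomology associated to the conductor. First I would record the relevant structure: since $R$ is excellent, reduced, local, and one-dimensional, $S$ is a module-finite, reduced, semilocal ring which is a finite product of one-dimensional normal (hence regular, hence $F$-rational) local domains $(S_i, \mfn_i, \ell_i)$, with $\prod \ell_i$-dimension count encoded by $[\ell:k]$; the number of these factors, after strict henselization, accounts for $b(R)$. The key cohomological input is the short exact sequence $0 \to R \to S \to S/R \to 0$ of $R$-modules, where $S/R$ has finite length, yielding $0 \to S/R \to H^1_\mfm(R) \to H^1_\mfm(S) \to 0$. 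Since each $S_i$ is regular of dimension one, $H^1_{\mfn_i}(S_i)$ has tight closure zero and Frobenius closure zero, so $0^*_{H^1_\mfm(S)} = 0^F_{H^1_\mfm(S)} = 0$. Tight closure and Frobenius closure of the zero submodule commute appropriately with the module-finite extension, so $0^*_{H^1_\mfm(R)}$ and $0^F_{H^1_\mfm(R)}$ both lie inside the finite-length submodule $S/R \subseteq H^1_\mfm(R)$, reducing the entire computation to understanding these two closures inside $S/R$.

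Next I would identify the two closures explicitly inside $S/R$. The Frobenius closure $0^F_{H^1_\mfm(R)}$ should be the image of the kernel of Frobenius action on $S/R$, while the tight closure $0^*_{H^1_\mfm(R)}$ should be the full image of $S/R$ in $H^1_\mfm(R)$ — intuitively, every element of $S/R$ becomes a tight-closure element because $S$ is a finite birational extension (so the conductor, a nonzero ideal, serves as the tight closure multiplier $c$). Granting this, $0^*_{H^1_\mfm(R)}/0^F_{H^1_\mfm(R)} \cong (S/R)/(\text{Frobenius-stable part})$. I would then analyze the $k$-vector space dimension of this quotient. The Frobenius map acts on $S$ compatibly with its action on each factor $S_i$; the "new" directions that are killed — the ones counting the failure of $R \to S$ to be an isomorphism modulo Frobenius — should organize, after a careful length count using that each residue extension $\ell_i/k$ contributes a factor of $[\ell_i:k]$ and there are $b(R)$ total branches over the strict henselization, into exactly $[\ell:k](b(R)-1)$. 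Here the "$-1$" arises because the diagonal copy of $R$ inside $S$ (the single branch that is already there) contributes nothing to the quotient.

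The main obstacle I anticipate is the precise bookkeeping in the last step: showing that the $k$-dimension of $(S/R)$ modulo its Frobenius-closure part equals $[\ell:k](b(R)-1)$ rather than some other combination of the $[\ell_i:k]$ and the number of analytic branches. This requires carefully relating (i) geometric branches of $R$, which are minimal primes of the strict henselization $R^{\mathrm{sh}}$, to (ii) the splitting behavior of $S \otimes_R R^{\mathrm{sh}}$ into a product indexed by those branches, and (iii) the Frobenius-stable structure of $S/R$, using that weak normalization is insensitive to purely inseparable phenomena so that $\ell$ is separable over $k$ and $S^{\mathrm{sh}}$ is a product of $b(R)$ copies of strictly henselian DVRs. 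A clean way to handle (iii) is to base change to the strict henselization (which is faithfully flat and commutes with local cohomology and with both closures in this setting), where $k$ becomes separably closed, $[\ell:k]$ collapses appropriately, and $S/R$ becomes a direct sum whose Frobenius-invariants are transparent; then descend the dimension count. I would also need the elementary fact that for a reduced one-dimensional local ring, $0^*_{H^1_\mfm(R)} = 0^F_{H^1_\mfm(R)}$ if and only if $R$ has a single geometric branch, which the formula immediately yields and which serves as a useful consistency check. The "in particular" statement for perfect $k$ is then immediate since $[\ell:k] = 1$: for perfect $k$, $R$ is excellent and reduced so $\ell$, being a finite separable extension of a perfect field, equals $k$, forcing $[\ell:k]=1$ and hence $b(R) = \dim_k 0^*_{H^1_\mfm(R)}/0^F_{H^1_\mfm(R)} + 1$.
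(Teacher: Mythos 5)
Your core identification---that inside $H^1_\mfm(R)$ the tight closure of zero is the image of $\overline{R}/R$ and the Frobenius closure of zero is the image of the (weak normalization)$/R$, so that the quotient is $\overline{R}$ modulo the weak normalization---is essentially the paper's route (the paper phrases it as $(x^n)^* = x^n\overline{R}$ and $(x^n)^F = x^nS$ for a parameter $x$ in the conductor, then passes to the injective direct limit). But your setup conflates the weak normalization with the normalization. The ring $(S,\mfn,\ell)$ in the statement is the weak normalization: it is \emph{local} (since $R\to S$ is purely inseparable, hence a homeomorphism on spectra) and in general not normal; it is the normalization $\overline{R}$ that is semilocal with regular local factors $(S_i,\mfn_i,\ell_i)$. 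So your opening structural claims, and the assertion $0^*_{H^1_\mfm(S)}=0^F_{H^1_\mfm(S)}=0$, only make sense with $S$ replaced by $\overline{R}$; likewise ``the kernel of the Frobenius action on $S/R$'' should be the submodule killed by some \emph{power} of Frobenius, which is precisely (weak normalization)$/R$ inside $\overline{R}/R$. More seriously, you get the nature of $\ell$ backwards: $\ell/k$ is \emph{purely inseparable}, not separable (the separable parts live in the extensions $\ell\to K_i=\overline{R}/\mathfrak{M}_i$). This also breaks your justification of the ``in particular'' clause: a finite separable extension of a perfect field need not be trivial; what forces $\ell=k$ for perfect $k$ is pure inseparability.

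The decisive step---that $\dim_k \overline{R}/S=[\ell:k](b(R)-1)$---is exactly what you defer as ``the main obstacle,'' and your sketch does not close it. The paper's argument needs: (i) $b(R)=\sum_i[K_i:k]_{\text{sep}}$ for the residue fields $K_i$ of $\overline{R}$; (ii) $k\to\ell$ purely inseparable and $\ell\to K_i$ separable, giving $[K_i:k]=[\ell:k]\,[K_i:k]_{\text{sep}}$; and (iii) the Jacobson radical of $\overline{R}$ lies in $S$ and equals $\mfn$, and, because $S$ is weakly normal, the conductor of $S\subseteq\overline{R}$ is radical (\cite{LV81}), hence equal to $\mfn$, which produces the exact sequence $0\to\ell\to K_1\times\cdots\times K_t\to\overline{R}/S\to 0$ and the count. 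None of (ii)--(iii) appears in your proposal, and your alternative plan (base change to $R^{sh}$ and descend) rests on the incorrect premise that $\ell/k$ is separable---the factor $[\ell:k]$ survives passage to the strict henselization precisely because $\ell/k$ is purely inseparable and is not absorbed into $k^{\text{sep}}$---as well as on unverified claims that the two closures and your identifications commute with strict henselization. As written, the proof of the formula itself is therefore missing its essential bookkeeping.
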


In \cite{SW08}, Singh and Walther give a formula to count the number of connected components of the punctured spectrum of the strict henselization of a complete local ring $(R,\mfm)$ with algebraically closed coefficient field using the \textit{semi-stable part} of the Frobenius action on $H^1_\mfm(R)$; see \Cref{rmk: SW reconciliation}. Our work extends theirs in dimension one by removing the hypotheses that $R$ be complete and that the residue field be algebraically closed. There are also computational advantages to our results since an $R/\mfm$-vector space basis of $0^*_{H^1_\mfm(R)}/0^F_{H^1_\mfm(R)}$ is readily available in many situations, including when $R$ is standard graded over the field $R/\mfm$. We compute $b(R)$ in several examples using this technique in \Cref{sec: Frobenius and geometric branches}.

A major aim of this paper is to understand rings with a single geometric branch. In the light of the above theorem, this naturally leads us to study $F$-nilpotent rings---a recently introduced singularity type in prime characteristic. Defined by Blickle and Bondu in \cite{BB} under the name ``close to $F$-rational", a local ring $(R,\mfm)$ of dimension $d$ is \textit{$F$-nilpotent} if, for each $j<d$, the canonical Frobenius action on $H^j_\mfm(R)$ is nilpotent, and the tight closure of the zero submodule in $H^d_\mfm(R)$ is also nilpotent, that is, $0^*_{H^d_\mfm(R)} = 0^F_{H^d_\mfm(R)}$.  

In \cite{ST17}, Srinivas and Takagi define a ring of characteristic zero to be of \textit{$F$-nilpotent type} if almost all of its mod $p$ reductions are $F$-nilpotent. They give a characterization of two-dimensional normal rings of $F$-nilpotent type over the complex numbers in terms of their divisor class groups. They also give a characterization of three-dimensional graded normal rings of $F$-nilpotent type over the complex numbers in terms of the divisor class groups and Brauer groups (cf. \cite[Theorems 4.1,4.2]{ST17}). In this paper, we show the following.

\begin{mthm}[\Cref{thm: F-nilpotent implies geo unibranched}, \Cref{col:main corollary}]
Suppose $R$ is an excellent, reduced ring of prime characteristic $p>0$. Then, if $R$ is $F$-nilpotent, the normalization map $R_\mfm\rightarrow \overline{R_\mfm}$ is purely inseparable for each maximal ideal $\mfm$ of $R$ so that $R_\mfm$ is geometrically unibranched. In particular, reduced, excellent $F$-nilpotent local rings are domains. 

Furthermore, if $\dim R = 1$, then $R$ is $F$-nilpotent if and only if $R_\mfm$ is geometrically unibranched for each maximal ideal $\mfm$ of $R$.
\end{mthm}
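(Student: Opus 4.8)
Most of the statement reduces almost entirely to one nontrivial implication. Since ``$R$ is $F$-nilpotent'' and ``$R_\mfm$ is geometrically unibranched for all $\mfm$'' are conditions on the local rings $R_\mfm$, I may assume $(R,\mfm,k)$ is local; if $\dim R=0$ then $R$ is a field and everything is trivial. If $\dim R\geq 1$, then as $R$ is reduced and locally equidimensional, $\mfm\notin\operatorname{Ass}R=\Min R$, so $\operatorname{depth}R\geq 1$ and $H^0_\mfm(R)=0$; hence when $\dim R=1$ the condition ``$R$ is $F$-nilpotent'' is precisely ``$0^*_{H^1_\mfm(R)}=0^F_{H^1_\mfm(R)}$''. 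Plugging this into the Main Theorem yields, in dimension one, the equivalences: $R$ is $F$-nilpotent $\iff \dim_k 0^*_{H^1_\mfm(R)}/0^F_{H^1_\mfm(R)}=0\iff [\ell:k]\,(b(R)-1)=0\iff b(R)=1\iff R$ is geometrically unibranched (here $\ell$ is the residue field of the weak normalization, as in the Main Theorem). This proves the ``furthermore'' in dimension one, and it remains to show: \emph{if $R$ is excellent, reduced, equidimensional, local and $F$-nilpotent, then $R$ is geometrically unibranched.}

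I would prove this by induction on $d=\dim R$, the cases $d\leq 1$ being the previous paragraph. Assume $d\geq 2$ and the claim in dimensions $<d$. The key input is the permanence property that $F$-nilpotence localizes for excellent (locally equidimensional) rings: every $R_\mfq$ is then again excellent, reduced, equidimensional and $F$-nilpotent. Hence for each non-maximal prime $\mfq$ we have $\dim R_\mfq<d$ and, by the inductive hypothesis, $b(R_\mfq)=1$. Since geometric-branch counts are compatible with \'etale localization, the number of irreducible components of $\operatorname{Spec}R^{\mathrm{sh}}$ passing through a point $\mathfrak Q$ lying over a prime $\mfq$ of $R$ is at most $b(R_\mfq)$; and any non-closed $\mathfrak Q$ lies over a non-maximal $\mfq$ (because $R^{\mathrm{sh}}/\mfm R^{\mathrm{sh}}$ is a field), for which $b(R_\mfq)=1$. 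Thus each non-closed point of $\operatorname{Spec}R^{\mathrm{sh}}$ lies on a unique component, so the finitely many irreducible components $Z_1,\dots,Z_{b(R)}$ of $\operatorname{Spec}R^{\mathrm{sh}}$ — each of dimension $d\geq 1$ — meet pairwise only at the closed point $\mathfrak m^{\mathrm{sh}}$. Therefore
\[
\operatorname{Spec}R^{\mathrm{sh}}\setminus\{\mathfrak m^{\mathrm{sh}}\}=\bigsqcup_{i=1}^{b(R)}\bigl(Z_i\setminus\{\mathfrak m^{\mathrm{sh}}\}\bigr)
\]
is a \emph{disjoint} union of $b(R)$ nonempty connected opens (each $Z_i\setminus\{\mathfrak m^{\mathrm{sh}}\}$ is a nonempty open in the irreducible space $Z_i$), so the punctured spectrum of $R^{\mathrm{sh}}$ has exactly $b(R)$ connected components. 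It remains to show it is connected.

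For this I use the rest of $F$-nilpotence: since $d\geq 2$, the natural Frobenius on $H^1_\mfm(R)$ is nilpotent, hence so is the Frobenius on $H^1_{\mathfrak m^{\mathrm{sh}}}(R^{\mathrm{sh}})\cong H^1_\mfm(R)\otimes_R R^{\mathrm{sh}}$. As $R^{\mathrm{sh}}$ is reduced with $H^0_{\mathfrak m^{\mathrm{sh}}}(R^{\mathrm{sh}})=0$, there is a Frobenius-equivariant exact sequence $0\to R^{\mathrm{sh}}\to\Gamma\bigl(\operatorname{Spec}R^{\mathrm{sh}}\setminus\{\mathfrak m^{\mathrm{sh}}\},\mathcal O\bigr)\to H^1_{\mathfrak m^{\mathrm{sh}}}(R^{\mathrm{sh}})\to 0$. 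Any idempotent $e$ of the middle ring satisfies $e^p=e$, so its image in $H^1_{\mathfrak m^{\mathrm{sh}}}(R^{\mathrm{sh}})$ is Frobenius-fixed, hence zero by nilpotence; thus $e$ is pulled back from the local ring $R^{\mathrm{sh}}$ and so $e\in\{0,1\}$. Hence the punctured spectrum of $R^{\mathrm{sh}}$ has no nontrivial idempotents, so it is connected, and therefore $b(R)=1$: $\operatorname{Spec}R^{\mathrm{sh}}$ is irreducible, i.e.\ $R$ is geometrically unibranched, and in particular $R$ (being reduced with a unique minimal prime) is a domain. (The connectedness argument is the Singh--Walther correspondence \cite{SW08} applied after strict henselization; cf.\ \Cref{sec: Frobenius and geometric branches}.) Finally, \Cref{col:main corollary} is the first paragraph combined with this implication.

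The main obstacle I anticipate is exactly the permanence input used in the induction: that $F$-nilpotence is preserved under localization. This lies deeper than the rest — it relies on uniform bounds for Frobenius test exponents — and is what makes the induction on dimension available; the other ingredients (behaviour of branch counts under \'etale localization, the idempotent/connectedness computation, the reductions to the local and one-dimensional cases) are comparatively formal. A more self-contained but more delicate route to ``$F$-nilpotent $\Rightarrow$ domain'' avoids $R^{\mathrm{sh}}$ and uses the conductor sequence $0\to R\to\overline R\to C\to 0$: the conductor contains a nonzerodivisor (as $R$ is generically normal), so $C$ and all of $R\Gamma_\mfm(C)$ are killed by a nonzerodivisor, whence the image of $H^{d-1}_\mfm(C)\to H^d_\mfm(R)$ is a Frobenius-stable submodule killed by a nonzerodivisor and thus contained in $0^*_{H^d_\mfm(R)}=0^F_{H^d_\mfm(R)}$; one must then show that if $R$ is reducible, or its normalization has a residue extension with nontrivial separable part, this image (or its interaction with the lower $H^j_\mfm(R)$) carries a non-nilpotent Frobenius — a contradiction. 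Making this precise when the gluing data fails to appear in top local cohomology is where that alternative becomes hard.
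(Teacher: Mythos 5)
Your proposal is correct in substance but takes a genuinely different, and considerably heavier, route than the paper. The paper proves \Cref{thm: F-nilpotent implies geo unibranched} in one step at the given maximal ideal: for $x/y\in Q(R)$ integral over $R$ one has $x\in\overline{(y)}=(y)^*$ (integral closure equals tight closure for principal ideals), and since $y$ is a nonzerodivisor, hence a parameter element of the excellent, equidimensional, $F$-nilpotent local ring, $(y)^*=(y)^F$ by \cite[Corollary~5.15]{PQ}; thus $x^{p^e}\in(y^{p^e})$, so the weak normalization equals the normalization, i.e.\ $R\to\overline{R}$ is purely inseparable, and geometric unibranchedness is immediate from \Cref{rmk: geometric branches counted by separable degree}, with the domain statement coming from $R^h$ being reduced with a unique minimal prime. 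Your dimension-one equivalence via \Cref{thm: counting branches main thm} coincides with the paper's derivation of \Cref{col:main corollary}. For $d\ge 2$ you instead run an induction whose engine is the assertion that $F$-nilpotence localizes for excellent, locally equidimensional rings; that is a true theorem in this circle of ideas (of comparable depth to the \cite{PQ} input the paper uses, resting on uniform Frobenius test exponent/HSL bounds), but it is an external ingredient you do not prove, and your induction stands or falls with it --- you correctly flag this. Granting it, the rest of your step is sound: the minimal primes of $(R^{sh})_{\mathfrak Q}$ are bounded by $b(R_\mfq)=1$ for non-closed $\mathfrak Q$, so the components of $\operatorname{Spec}R^{sh}$ meet only at the closed point (only nonemptiness of each punctured component is needed, which follows since $\mfm^{sh}$ is not a minimal prime; full equidimensionality of $R^{sh}$ is not required), and the idempotent computation with the nilpotent Frobenius on $H^1_{\mfm^{sh}}(R^{sh})\cong H^1_\mfm(R)\otimes_R R^{sh}$ --- a Hochster--Huneke/Singh--Walther style connectedness argument in the spirit of \cite{SW08} --- forces the punctured spectrum of $R^{sh}$ to be connected, whence $b(R)=1$. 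What the paper's route buys: it is short, needs no localization theorem, no strict henselization bookkeeping and no induction, and it produces the stronger intermediate fact that $\overline{R}$ is purely inseparable over $R$ (equivalently $S=\overline{R}$), which the paper reuses later (\Cref{rmk: dual to schwede}, \Cref{cor: f-nilp affine semigroup ring}). What your route buys: it isolates the topological content, showing that nilpotence of Frobenius on $H^1_\mfm(R)$ alone controls connectedness of the punctured spectrum of $R^{sh}$, while the branch separation is governed by the lower-dimensional localizations --- at the cost of invoking localization of $F$-nilpotence and of losing the purely inseparable normalization statement.
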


A key insight of this paper is that the number of branches of a local ring in positive characteristic can be counted by studying its weak normalization. The weak normalization of a reduced ring encapsulates the purely inseparable part of its normalization. In \cite{Sch09}, Schwede showed that an $F$-injective ring which admits a dualizing complex (a very mild requirement) must be weakly normal. In the course of proving the above theorem, we show that $F$-nilpotent rings exhibit a ``dual" property to $F$-injective rings, in that the weak normalization of an $F$-nilpotent ring must itself be normal. Since an $F$-rational local ring is precisely one which is both $F$-injective and $F$-nilpotent, our result, together with that of Schwede, provides, perhaps amusingly, a novel proof of the well-known fact that an $F$-rational local ring is a normal domain; see Remark \ref{rmk: dual to schwede}.

As a final application of our techniques, we study the computational aspects of trivializing the Frobenius closure of parameter ideals of a local ring using its weak normalization. The Frobenius test exponent $\fte R$ of a local ring $(R,\mfm)$ is the smallest $e$ (if one exists) such that $(\mfq^F)^{\fbp{p^e}}=\mfq^{\fbp{p^e}}$ for all parameter ideals $\mfq$ of $R$. In \cite{KS05}, Katzman-Sharp showed that Cohen-Macaulay local rings have finite Frobenius test exponents and in \cite{Quy}, Quy showed that $F$-nilpotent local rings also have this property. We show that to determine whether an (excellent, reduced) local ring has finite Frobenius test exponent, it suffices to determine whether its weak normalization has  finite Frobenius test exponent.

\begin{mthm}[\Cref{cor: weak normalization finite fte}]
Let $R$ be an excellent, reduced local ring of prime characteristic and write $S$ for its weak normalization. Then, $\fte R$ is finite if and only if $\fte S$ is finite.
\end{mthm}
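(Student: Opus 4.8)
The plan is to deduce this from a general inequality comparing Frobenius test exponents along a module-finite, \emph{uniformly purely inseparable} extension, applied to $R\subseteq S$. I will use the following properties of the weak normalization $S$, all supplied by the earlier development together with excellence: $S$ is module-finite over $R$; $S$ is local with maximal ideal $\mfn$ lying over $\mfm$ and $\sqrt{\mfm S}=\mfn$; $\dim S=\dim R=:d$; and, crucially, there is a single exponent $e_0$ with $s^{p^{e_0}}\in R$ for every $s\in S$. (Such a uniform $e_0$ exists since $S$ is a finitely generated $R$-algebra whose generators each have a $p$-power lying in $R$, and $x\mapsto x^{p^{e_0}}$ is a ring homomorphism in characteristic $p$.) Granting these, I will establish the inequalities $\fte R\le \fte S+e_0$ and $\fte S\le \fte R+2e_0$, which together give the asserted equivalence.

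The first step is to observe that parameter ideals match up under the extension. If $\mfq=(a_1,\dots,a_d)R$ is a parameter ideal of $R$, then $\mfm^nS\subseteq \mfq S\subseteq \mfm S$ together with $\sqrt{\mfm S}=\mfn$ shows that $\mfq S$ is $\mfn$-primary, hence a parameter ideal of $S$. Conversely, if $J=(y_1,\dots,y_d)S$ is a parameter ideal of $S$, then each $y_i^{p^{e_0}}$ lies in $R\cap\mfn=\mfm$, and $\mfq_J:=(y_1^{p^{e_0}},\dots,y_d^{p^{e_0}})R$ is a parameter ideal of $R$ with $\mfq_JS=J^{\fbp{p^{e_0}}}$: any prime of $R$ containing $\mfq_J$ extends, by lying over, to a prime of $S$ containing the $\mfn$-primary ideal $J^{\fbp{p^{e_0}}}$, hence equals $\mfm$. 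Throughout I also use that $(IS)^{\fbp{q}}=I^{\fbp{q}}S$ and the standard reformulation $\fte R\le N\iff (\mfq^F)^{\fbp{p^N}}=\mfq^{\fbp{p^N}}$ for all parameter ideals $\mfq$ of $R$ (the set of valid exponents being upward closed).

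For the first inequality, suppose $\fte S=M<\infty$ and fix a parameter ideal $\mfq=(a_1,\dots,a_d)$ of $R$ and $x\in\mfq^F$. Then $x\in(\mfq S)^F$, so $x^{p^M}\in(\mfq S)^{\fbp{p^M}}=\mfq^{\fbp{p^M}}S$; writing $x^{p^M}=\sum_j a_j^{p^M}s_j$ with $s_j\in S$ and raising to the $p^{e_0}$ power turns the coefficients into $s_j^{p^{e_0}}\in R$, whence $x^{p^{M+e_0}}\in\mfq^{\fbp{p^{M+e_0}}}$ and $\fte R\le M+e_0$. For the second inequality, suppose $\fte R=N<\infty$ and fix a parameter ideal $J=(y_1,\dots,y_d)$ of $S$ and $y\in J^F$ with $y^{p^e}\in J^{\fbp{p^e}}$. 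Applying Frobenius $e_0$ more times places $y^{p^{e+e_0}}$ in $(\mfq_JS)^{\fbp{p^e}}=\mfq_J^{\fbp{p^e}}S$, and one further application of Frobenius pushes the resulting expansion into $R$: with $z:=y^{p^{2e_0}}\in R$ one gets $z^{p^e}\in\mfq_J^{\fbp{p^{e+e_0}}}=(\mfq_J^{\fbp{p^{e_0}}})^{\fbp{p^e}}$, i.e.\ $z$ lies in the Frobenius closure of the parameter ideal $\mfq_J^{\fbp{p^{e_0}}}$ of $R$. Now $\fte R=N$ forces $z^{p^N}\in(\mfq_J^{\fbp{p^{e_0}}})^{\fbp{p^N}}=(y_1^{p^{2e_0+N}},\dots,y_d^{p^{2e_0+N}})R\subseteq J^{\fbp{p^{2e_0+N}}}$, so $y^{p^{2e_0+N}}\in J^{\fbp{p^{2e_0+N}}}$ and $\fte S\le 2e_0+N$.

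I expect the main obstacle to lie not in the Frobenius-closure bookkeeping above---which is routine---but in the structural input: that the weak normalization is module-finite over $R$ (this is where excellence, via finiteness of the normalization, is essential) and, above all, that it is \emph{uniformly} purely inseparable over $R$, meaning a single $e_0$ with $s^{p^{e_0}}\in R$ for all $s\in S$ exists. This is precisely the sense in which the weak normalization captures the purely inseparable part of the normalization, and it is what supplies the exponent $e_0$ driving both bounds. One should also confirm, since $R$ (hence $S$) need not be equidimensional, that the parameter-ideal correspondence and the notion of $\fte$ behave as expected in that generality; the arguments above use only lying over, going up, and radicals, so no equidimensionality hypothesis is needed.
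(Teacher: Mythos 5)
Your proof is correct and follows essentially the same route as the paper: the paper first establishes the general inequalities $\fte I \le \fte (IS) + e_0$ and $\fte J \le \fte (F^{e_0}(J)R) + e_0$ for a purely inseparable extension of finite pure inseparability index $e_0$ (its \Cref{thm: computing fte along pi exts}) and then specializes to the weak normalization, which is exactly your mechanism of raising Frobenius-closure equations to the $p^{e_0}$-th power, with excellence supplying module-finiteness and hence a uniform $e_0$. The only differences are cosmetic: the paper gets the sharper bound $\fte S \le \fte R + e_0$ (your second Frobenius application is unneeded, since $y^{p^{e_0}}$ already lies in $R$ and your expansion of $y^{p^{e+e_0}}$ already has coefficients in $R$), and the paper's parameter ideals include partial systems of parameters, a case your $\mfn$-primariness argument does not literally cover but which follows from the homeomorphism on spectra induced by pure inseparability.
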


The following theorem demonstrates that the usually intractable calculations involved in computing the tight closure of ideals are much easier in $F$-nilpotent affine semigroup rings, and furthermore, that the pure inseparability of the normalization map characterizes $F$-nilpotent affine semigroup rings in any dimension.

\begin{mthm}[\Cref{cor: f-nilp affine semigroup ring}]
Suppose $R$ is a locally excellent domain and its integral closure $\overline{R}$ is $F$-regular (for instance, if $R$ is an affine semigroup ring defined over a field $k$ of prime characteristic $p>0$). Then, $R$ is $F$-nilpotent if and only if $R\rightarrow \overline{R}$ is purely inseparable. Further, if this is the case, then  $I^F=I^*$ for all ideals $I$ of $R$ and $\fte I\le e_0$, where $e_0$ is the pure inseparability index of $R\rightarrow \overline{R}$. 
\end{mthm}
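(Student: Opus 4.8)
The plan is to read the equivalence off the characteristic-$p$ structure of the weak normalization together with \Cref{thm: F-nilpotent implies geo unibranched}, and then to deduce the statements about $I^F$ and $\fte I$ from a short tight-closure computation. Write $S$ for the weak normalization of $R$ inside its total ring of fractions; since $R$ is a domain, $R \subseteq S \subseteq \overline{R}$ are domains with common fraction field $\Frac(R)$, and in characteristic $p$ one has $S = \{x \in \overline{R} : x^{p^e} \in R \text{ for some } e \ge 0\}$ (the containment $\supseteq$ because $\spec R[x] \to \spec R$ is a universal homeomorphism whenever $x^{p^e}\in R$, and $\subseteq$ because the seminormalization is generated by elementary subintegral elements and the remaining weak-normalization step in characteristic $p$ adjoins only $p$-th roots). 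The crux is the observation that the following are equivalent: (i) $R \to \overline{R}$ is purely inseparable, i.e.\ $x^{p^{e_0}}\in R$ for all $x\in\overline{R}$ for some $e_0$; (ii) $S = \overline{R}$; (iii) $R_\mfm$ is geometrically unibranched for every $\mfm \in \Max R$. Given (i), the description of $S$ and finiteness of $\overline{R}$ over $R$ force $S = \overline{R}$ (and the minimal such $e_0$ is the pure inseparability index); conversely $S = \overline{R}$ gives (i) by taking $e_0 = \max_i e_i$ over module generators $x_i$ with $x_i^{p^{e_i}}\in R$; and $S = \overline{R}$ says exactly that $\spec \overline{R} \to \spec R$ is a universal homeomorphism, which is geometric unibranchedness at each point.

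For the forward implication, suppose $R$ is $F$-nilpotent. For each $\mfm\in\Max R$ the ring $R_\mfm$ is an excellent, reduced, (locally) equidimensional local domain that is $F$-nilpotent, so \Cref{thm: F-nilpotent implies geo unibranched} applies to give that $R_\mfm$ is geometrically unibranched; thus (iii) holds, and by the equivalence above $R \to \overline{R}$ is purely inseparable. (This direction does not use $F$-regularity of $\overline{R}$.)

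For the converse, suppose $x^{p^{e_0}}\in R$ for all $x\in\overline{R}$. Checking $F$-nilpotence is local, so assume $(R,\mfm)$ is local; by the equivalence $R$ is geometrically unibranched, so $\overline{R}$ is local, and, being $F$-regular, it is $F$-rational, hence Cohen--Macaulay with $0^*_{H^d_\mfm(\overline{R})} = 0$, where $d = \dim R = \dim \overline{R}$. The containments (of subrings of $\overline{R}$) $\overline{R}^{p^{e_0}} \subseteq R \subseteq \overline{R}$ factor the $e_0$-th Frobenius action on local cohomology through $\overline{R}$: letting $\iota\colon R\hookrightarrow\overline{R}$ be the inclusion and $\phi\colon\overline{R}\to R$, $\phi(x) = x^{p^{e_0}}$ (which lands in $R$), we have $\phi\circ\iota = F^{e_0}_R$, so $\phi_*\circ\iota_* = F^{e_0}$ on each $H^j_\mfm(R)$. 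For $j < d$, Cohen--Macaulayness of $\overline{R}$ gives $H^j_\mfm(\overline{R}) = 0$, so $F^{e_0}$ annihilates $H^j_\mfm(R)$ and the Frobenius action there is nilpotent. For $j = d$: if $\eta\in 0^*_{H^d_\mfm(R)}$, witnessed by a nonzero $c\in R$, then $c\,F^e(\iota_*\eta) = \iota_*(c\,F^e\eta) = 0$ in $H^d_\mfm(\overline{R})$ for $e\gg 0$, and $c\neq 0$ in the domain $\overline{R}$, so $\iota_*\eta \in 0^*_{H^d_\mfm(\overline{R})} = 0$; applying $\phi_*$ gives $F^{e_0}(\eta) = 0$. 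Hence $0^*_{H^d_\mfm(R)} = 0^F_{H^d_\mfm(R)}$ and $R$ is $F$-nilpotent. (This is, in effect, the descent of $F$-nilpotence along the purely inseparable finite extension $R\subseteq\overline{R}$.)

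It remains to prove the final assertion; assume $x^{p^{e_0}}\in R$ for all $x\in\overline{R}$, and let $I = (a_1,\dots,a_n)$ be any ideal of $R$. Since $\overline{R}$ is $F$-regular, $I\overline{R}$ is tightly closed; and since $R,\overline{R}$ are domains, any tight-closure relation over $R$ witnessed by a nonzero $c$ is also one over $\overline{R}$, so $I^* \subseteq (I\overline{R})^*\cap R = I\overline{R}\cap R$. If $x\in I\overline{R}\cap R$, write $x = \sum_i a_i b_i$ with $b_i\in\overline{R}$; then in characteristic $p$, $x^{p^{e_0}} = \sum_i a_i^{p^{e_0}} b_i^{p^{e_0}}$ with each $b_i^{p^{e_0}}\in R$, so $x^{p^{e_0}}\in I^{\fbp{p^{e_0}}}$; hence $x\in I^F$ and $(I\overline{R}\cap R)^{\fbp{p^{e_0}}} \subseteq I^{\fbp{p^{e_0}}}$. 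Combining with $I^F\subseteq I^*$ yields $I^* = I^F = I\overline{R}\cap R$ and $(I^F)^{\fbp{p^{e_0}}} = I^{\fbp{p^{e_0}}}$, i.e.\ $\fte I \le e_0$. The main obstacle I anticipate is the bookkeeping in the crux step --- matching $S = \overline{R}$ with the concrete containment $\overline{R}^{p^{e_0}}\subseteq R$ and with geometric unibranchedness while keeping control of the index $e_0$ --- which rests entirely on the characteristic-$p$ weak-normalization machinery developed earlier in the paper; granting that, the local-cohomology factorization and the ideal computation are routine.
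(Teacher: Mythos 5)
Your converse direction and your closure computation are fine: the local-cohomology factorization $\phi_*\circ\iota_*=F^{e_0}$ correctly reproves, in this special case, the descent half of \Cref{thm: ascent/descent of F-nilpotence} (the paper instead just cites that theorem after noting $\overline{R}$ is $F$-nilpotent), and your final paragraph is essentially the paper's \Cref{thm: sf-nilp if pi ext is freg}. The problem is the forward direction, where you rely on the claimed equivalence (i)$\Leftrightarrow$(ii)$\Leftrightarrow$(iii). While (i)$\Leftrightarrow$(ii) is correct, the implication ``(iii) $R_\mfm$ geometrically unibranched for every $\mfm\in\Max R$ $\Rightarrow$ (ii) $S=\overline{R}$'' is false. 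Geometric unibranchedness at a maximal ideal only says the fiber of $\spec\overline{R}\to\spec R$ over that \emph{closed} point is a single point with purely inseparable residue field extension; a universal homeomorphism (equivalently $S=\overline{R}$) requires this at \emph{all} primes, and the closed-point condition does not propagate. Concretely, take $p\neq 2$ and $R=\bigl(k[x,y,z]/(z^2-x^2y)\bigr)_{(x,y,z)}$, so $\overline{R}=k[x,w]_{(x,w)}$ with $y\mapsto w^2$, $z\mapsto xw$. Then $R$ is an excellent local domain which is geometrically unibranched (the unique maximal ideal $(x,w)$ lies over $\mfm$, with trivial residue extension), but no power $w^{p^e}$ lies in $R$, so $R\to\overline{R}$ is not purely inseparable; indeed over the height-one prime $(x,z)$ the residue field extension $k(y)\subset k(w)$ is separable of degree $2$. (By the theorem itself this $R$ is not $F$-nilpotent, so your chain ``$F$-nilpotent $\Rightarrow$ unibranched at closed points $\Rightarrow$ purely inseparable'' cannot be repaired at the level of the \emph{statement} of \Cref{thm: F-nilpotent implies geo unibranched}; geometric unibranchedness is strictly weaker than pure inseparability of the normalization in dimension $\ge 2$, as the paper's closing remark of Section 3 also warns.)

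The fix is what the paper actually does: it invokes the \emph{proof}, not the statement, of \Cref{thm: F-nilpotent implies geo unibranched}. That proof shows directly that for any $x/y\in\overline{R}$ one has $x\in\overline{(y)}=(y)^*=(y)^F$ (using that $(y)$ is principal, resp.\ that $y$ is a parameter element in an excellent $F$-nilpotent local ring), whence $(x/y)^{p^e}\in R$; this gives $S=\overline{R}$, i.e.\ pure inseparability of $R\to\overline{R}$, with no detour through branches. If you replace your (iii)$\Rightarrow$(i) step by this argument (applied at each maximal ideal, and using finiteness of $\overline{R}$ over $R$ to get a single index $e_0$), the rest of your write-up goes through.
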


\section{Preliminaries}

All rings considered in this paper are commutative with identity and Noetherian; further, we often assume that our rings are reduced and excellent. A reduced, excellent local ring $(R,\mfm)$ is \textit{analytically unramified}, so that its $\mfm$-adic completion $\widehat{R}$ is reduced. This is equivalent to the property that the integral closure $\overline{R}$ of $R$ in its total ring of quotients is a finite $R$-module. In many of the theorems that follow, the conditions reduced and excellent can be relaxed to analytically unramified. Finally, we will almost universally assume that our rings are of positive prime characteristic. 

\subsection{Strict henselization and geometric branches} \label{subsection:sh and geo branches}

Throughout this subsection, let $(R,\mfm)$ be a local ring. We mention several important facts about the (strict) henselization which we will utilize later in the paper.

A ring $R$ is said to be \textbf{henselian} if it satisfies the conclusions of Hensel's lemma and \textbf{strictly henselian} if it is henselian and the residue field $R/\mfm$ is separably closed. The henselization $R^h$ of $R$ is the unique ring satisfying a universal mapping property with respect to maps from $R$ to any henselian ring. In particular, $R^h$ is obtained from $R$ by taking the direct limit of all local extensions $R\rightarrow R'$ which are \textit{\'etale} and induce an isomorphism on residue fields. The strict henselization is similar---it is constructed by taking the limit of all local extensions $(R,\mfm,k)\rightarrow (S,\mfn,\ell)$ such that $\ell$ is a subfield of $k^{\text{sep}}$ and the composition of the inclusions $k\rightarrow \ell \rightarrow k^{\text{sep}}$ agrees with the inclusion $k\rightarrow k^{\text{sep}}$. 

We will use the following well-known property of the (strict) henselization.

\begin{sthm}\label{thm: properties of henselization}
Write $k=R/\mfm$, and fix a separable closure $k^{\text{sep}}$ of $k$. Then, $(R^h,\mfm^h)$ and $(R^{sh},\mfm^{sh})$ are local rings, $R\rightarrow R^h\rightarrow R^{sh}$ is a sequence of faithfully flat unramified maps, and $R^h/\mfm^h \simeq k$ and $R^{sh}/\mfm^{sh} \simeq k^{\text{sep}}$.
\end{sthm}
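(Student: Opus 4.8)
This is a classical result, and the plan is to assemble it from the standard description of the (strict) henselization as a filtered colimit of \'etale neighborhoods, citing a standard reference (e.g.\ the Stacks Project, or Raynaud's book on henselian local rings) for the bookkeeping. Recall that $R^h = \varinjlim_i (R_i,\mfm_i)$, where the colimit runs over the filtered system of \emph{local, essentially \'etale neighborhoods} of $R$: local $R$-algebras $R_i$ that are localizations of \'etale $R$-algebras, with $\mfm_i \cap R = \mfm$ and with $R/\mfm \to R_i/\mfm_i$ an isomorphism, the transition maps being the (unique, local) $R$-algebra homomorphisms. The strict henselization $R^{sh}$ is the analogous colimit, except one now allows $R_i/\mfm_i$ to be any finite separable subextension of the fixed $k^{\text{sep}}/k$, compatibly with $k \to k^{\text{sep}}$. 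Both indexing systems are filtered.

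First I would establish \textbf{locality} and compute the residue fields simultaneously. Each $R_i$ is local and each transition map is a local homomorphism, and a filtered colimit of local rings along local maps is local: an element lying outside $\varinjlim \mfm_i$ is a unit already in some $R_i$, hence in $R^h$, so $\mfm^h := \varinjlim \mfm_i$ is the unique maximal ideal. This gives $R^h/\mfm^h = \varinjlim R_i/\mfm_i$. In the henselization case every $R_i/\mfm_i = k$, so $R^h/\mfm^h = k$; in the strict case the $R_i/\mfm_i$ range over all finite separable subextensions of $k^{\text{sep}}/k$, whose union is $k^{\text{sep}}$, so $R^{sh}/\mfm^{sh} = k^{\text{sep}}$. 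Since the \'etale neighborhoods of $R^h$ sitting inside $R^{sh}$ form a cofinal subsystem, the map $R^h \to R^{sh}$ has the same shape, and it suffices to treat $R \to R^h$ and $R \to R^{sh}$ directly.

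Next, \textbf{faithful flatness}: each essentially \'etale $R \to R_i$ is flat, flatness is preserved by filtered colimits, so $R \to R^h$ and $R \to R^{sh}$ are flat; being local (they send $\mfm$ into the respective maximal ideals by construction), they are faithfully flat, and the composite $R \to R^h \to R^{sh}$ is too. For \textbf{unramifiedness}---interpreted, as is appropriate for these ind-\'etale extensions, as $\mfm^h = \mfm R^h$ together with separability of the residue field extension---both conditions hold at every finite stage $R_i$ (an \'etale algebra is unramified, so $\mfm R_i = \mfm_i$ and $R_i/\mfm_i$ is separable over $k$) and are inherited by the colimit; the residue extension is $k = k$ for $R^h$ and the separable algebraic extension $k \hookrightarrow k^{\text{sep}}$ for $R^{sh}$, and composites of unramified maps are unramified.

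The main obstacle is not a deep argument but rather pinning down conventions so that each step above is immediate: one must verify that the indexing systems are genuinely filtered (guaranteeing uniqueness of transition maps and good behavior of colimits), that $\varinjlim \mfm_i$ is \emph{the} maximal ideal and not merely \emph{a} maximal ideal, and that ``unramified'' is taken in the ind-\'etale sense above rather than the finite-type sense. Once these are fixed, the proof is a routine passage to the limit, and the most efficient write-up simply cites the Stacks Project for the construction and for each of the listed properties of $R^h$ and $R^{sh}$.
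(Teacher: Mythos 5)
Your proposal is correct: the colimit-of-\'etale-neighborhoods argument (locality of a filtered colimit of local rings along local maps, residue fields computed as $\varinjlim R_i/\mfm_i$, flatness and $\mfm R_i=\mfm_i$ at each finite stage passing to the limit, and $R^{sh}$ realized as a strict henselization of $R^h$ via cofinality) is the standard proof of these facts. Note that the paper itself gives no proof of this statement --- it is recalled as a well-known property, consistent with the construction sketched in its Subsection 2.1 --- so your write-up, or simply a citation to the Stacks Project as you suggest, is exactly what is expected here.
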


Since $R\rightarrow R^h \rightarrow R^{sh}$ is a sequence of faithfully flat maps, the induced maps on spectra are surjective. In particular, we must have that $|\Min R|\le |\Min R^h| \le |\Min R^{sh}|$. Thus, if either $R^h$ or $R^{sh}$ has a unique minimal prime, so does $R$.

\begin{sdef}
Let $R$ be a ring and let $\mfm$ be a maximal ideal of $R$. The \textbf{number of (geometric) branches of $R$ at $\mfm$} is the number of minimal primes of the (strict) henselization of the local ring $R_\mfm$. If $(R,\mfm)$ is local, we denote the number of geometric branches of $R$ by $b(R)$, that is, $b(R)=|\Min R^{sh}|$. If $R$ has a single (geometric) branch, then we say that $R$ is \textbf{(geometrically) unibranched}.
\end{sdef}

The notions of branches and geometric branches of a ring can also be understood by studying the normalization of the ring. We next recall that the geometric branches of a local ring can be counted by the sums of the separable degrees of certain extension fields of the residue fields arising from the normalization map; see \cite[Tag~0C37(5)]{stacks}. 

\begin{srmk}\label{rmk: geometric branches counted by separable degree}
Let $(R,\mfm)$ be a local ring with $k=R/\mfm$ and let $\overline{R}$ be its normalization, a semi-local ring. By \cite[Tag~0C24]{stacks}, the maximal ideals of $\overline{R}$ correspond bijectively with the minimal primes of $R^{h}$ and the minimal primes of the completion  $\widehat{R}$ of $R$ at $\mfm$. Thus, $R$ is unibranched if and only if its normalization $\overline{R}$ is a local ring. 

Next, write $\Max \overline{R} = \{\mathfrak{M}_1,\ldots,\mathfrak{M}_b\}$, and $K_i = \overline{R}/\mathfrak{M}_i$. Then, \[b(R) = \sum_{i=1}^b [K_i:k]_{\text{sep}},\] where $[K_i:k]_{\text{sep}}$ is the separable degree of the extension $k\subset K_i$. Thus, $R$ is geometrically unibranched ($b(R) = 1$) if and only if $\overline{R} = (\overline{R}, \mathfrak{M}, K)$ is a local ring and the field extension $k \rightarrow K$ is \textit{purely inseparable}, that is, for each $x$ in $K$, there exists some positive integer $e$ such that $x^{p^e}$ lies in $k$ where $p>0$ is the characteristic of $k$. 
\end{srmk}

We provide an example to illustrate the difference between the number of branches and geometric branches of a ring. 
 
\begin{sex} 
Let $R$ the ring $\mathbb{F}_3[x,y]/(x^2+y^2)$ localized at the ideal $\mathbf{m} = (x,y)$ where $\mathbb{F}_3$ is the field with three elements. Note that the completion of $R$ at $\mathbf{m}$ is $\mathbb{F}_3[|x,y|]/(x^2+y^2)$, which is a domain. Therefore $R$ has a single branch. Alternatively, notice that $t = y/x$ lies in the normalization $\overline{R}$ of $R$. Further, $R[t]$ is the ring \[ \left( \frac{\mathbb{F}_3[t]}{(t^2+1)}[x]\right)_{\mathbf{m}}\simeq (\mathbb{F}_9[x])_{(x)}.\] Since $R[t]$ is a normal domain, it must be equal to $\overline{R}$. As the ring $\overline{R}$ is local, we again see that $R$ has a single branch.

Notice however that $R$ has two geometric branches, that is, $b(R) = 2$. This is because $R^{sh}$ is the ring $\mathbb{F}_3^{\text{sep}}[x,y]/(x+iy)(x-iy)$ localized at the ideal $(x,y)$, which has two minimal primes; here $i$ is a root of the separable polynomial $t^2+1$ over $\mathbb{F}_3[t]$. Alternatively, $\overline{R}$ is the ring $(\mathbb{F}_9[x])_{(x)}$ and $[\mathbb{F}_9 : \mathbb{F}_3]_{\text{sep}} = [\mathbb{F}_9 : \mathbb{F}_3] =2$ also confirms that $R$ has two geometric branches by \Cref{rmk: geometric branches counted by separable degree}.
\end{sex}

\subsection{Submodule closures in prime characteristic}

Throughout this subsection, let $R$ be a ring of prime characteristic $p>0$. The \textbf{Frobenius map} $F:R\rightarrow R$ is defined by $F(r)=r^p$, and is a ring endomorphism since $R$ is of characteristic $p$. We may denote denote the target of $F$ as $F_*(R)$, which we view as an $R$-module via $rF_*(s) = F_*(r^ps)$. 

\begin{sdef}\label{def: tight and frob closure}
Let $N\subset M$ be $R$-modules. The \textbf{Frobenius closure of $N$ in $M$}, denoted $N^F_M$, is the $R$-submodule of elements which vanish under the composition \begin{center}
    \begin{tikzcd}
        M\arrow{r}{\pi} & M/N \arrow{rr}{\operatorname{id}\otimes_R F^e} && M/N\otimes_R F^e_*(R)
    \end{tikzcd}
\end{center} for some $e \ge 0$. Similarly, if $R^\circ = \{c \in R \mid c\not\in\mfp \text{ for any } \mfp\in\Min R\}$, then the \textbf{tight closure of $N$ in $M$}, denoted $N^*_M$, is the $R$-submodule of elements which for some $c\in R^\circ$ vanish under the composition \begin{center}
    \begin{tikzcd}
        M\arrow{r}{\pi} & M/N \arrow{rr}{\operatorname{id}\otimes_R F^e} && M/N\otimes_R F^e_*(R) \arrow{rr}{\operatorname{id}\otimes_R \cdot F^e_*(c)} && M/N\otimes_R F^e_*(R) 
    \end{tikzcd}
\end{center} for all $e \gg 0$.
\end{sdef}

For any $R$-module $N$ of $M$, we have $N\subset N^F_M\subset N^*_M$. Of special interest is the case $M=R$ and $N=I$ is an ideal in $R$, where the definitions above agree with the usual Frobenius and tight closure of ideals. An interesting and largely open problem is to find methods to compute the tight and Frobenius closure of an ideal in a given ring. For now, we will focus on Frobenius closure. 

\begin{sdef}\label{def: Fte of I}
Let $I$ be an ideal of $R$. Since the ideal $I^F$ is finitely generated, there must be a positive integer $e$ such that for any $x \in I^F$, we have $x^{p^e} \in I^{\fbp{p^e}}$. Call the smallest such $e$ the \textbf{Frobenius test exponent of $I$}, written $\fte I$. 
\end{sdef}

Knowing $\fte I$ (or even an upper bound on $\fte I$) is desirable to compute $I^F$ since we can check whether $x \in I^F$ using a single equation instead of \textit{a priori} needing to check infinitely many. Even more useful in computing Frobenius closure in a ring $R$ would be knowing an upper bound on $\fte I$ over all ideals $I$ in $R$. Unfortunately, Brenner showed in \cite{Bre06} that no uniform upper bound on $\fte I$ can exist over all ideals $I$ in $R$, even if $R$ is a standard graded normal domain of dimension two. 

In contrast, Katzman-Sharp showed in \cite{KS05} that if $(R,\mfm)$ is a Cohen-Macaulay local ring, there is a uniform upper bound on $\fte \mfq$ over all ideals $\mfq$ in $R$ generated by a (partial) system of parameters. These ideals are called parameter ideals.

\begin{sdef}\label{def: Fte of R}
Let $(R,\mfm)$ be a local ring. The \textbf{Frobenius test exponent} (for parameter ideals) of $R$, written $\fte R$, is \[
\fte R = \sup \{ \fte \mfq \mid \mfq \subset R \text{ is a parameter ideal}\}\in \mbn\cup\{\infty\}.
\]
\end{sdef}

In particular, the result of Katzman-Sharp states that a Cohen-Macaulay local ring has a finite Frobenius test exponent. For a survey of other cases where the Frobenius test exponent is known to be finite, see \cite{Mad19}. Our techniques in \Cref{sec: Fte and pi exts} compute bounds on Frobenius test exponents using purely inseparable ring extensions.

We conclude this subsection with a useful lemma regarding the tight and Frobenius closure of a general linear form in a one-dimensional graded ring.

\begin{slem}\label{lem: tight and frob closure of reduction of m}
Let $R$ be a reduced ring of dimension one with homogeneous maximal ideal $\mfm$ and standard graded over an infinite field $k$. Further, suppose $x\in [R]_1$ is a reduction of the homogeneous maximal ideal\footnote{Note that such an $x$ exists since a general $k$-linear combination of the generators of $[R]_1$ is a reduction of $\mfm$ as $k$ is infinite.} $\mfm$, with $\mfm^{N+1}=x\mfm^N$. Then, the tight and Frobenius closure of $(x^n)$ for $n \ge N$ are as follows:
\begin{enumerate}[label=(\alph*)]
    \item $(x^n)+\mfm^{n+1} \subset (x^n)^F$, and if $k$ is perfect, then equality is attained.
    \item $(x^n)^* = \mfm^n$.
\end{enumerate}
Finally, the above equalities also hold in the local ring $R_\mfm$.
\end{slem}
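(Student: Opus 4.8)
The plan is to begin from the graded bookkeeping that controls both closures. Since $R$ is standard graded, $\mfm^j=[R]_{\ge j}$ for every $j\ge 0$, and the reduction hypothesis $\mfm^{N+1}=x\mfm^N$ iterates to $\mfm^{N+j}=x^j\mfm^N$ for all $j\ge 0$; in particular $\mfm^{2n}=x^n\mfm^n$ for $n\ge N$, so $(x^n)$ is a reduction of $\mfm^n$ with reduction number at most one. I would also record that $x$ is a nonzerodivisor: if $x$ lay in a minimal prime $\mfp$, then reducing the relation modulo $\mfp$ would force $\mfm(R/\mfp)$ to be nilpotent in a domain, hence $\mfp=\mfm$; but a graded ring with a field in degree zero is connected while $\dim R\ge 1$, so $\mfm\notin\Min R$. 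Finally, both closures are additive, so it suffices to test on homogeneous elements, and $(x^n)^F$ is itself homogeneous: if $u=\sum_d u_d$ lies in $(x^n)^F$ then $u^{p^e}=\sum_d u_d^{p^e}$ has its summands in the distinct degrees $dp^e$, and each must lie in the homogeneous ideal $(x^{np^e})$.

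For (a): if $u\in\mfm^{n+1}$ is homogeneous, then $u^{p^e}\in\mfm^{(n+1)p^e}=x^{(n+1)p^e-N}\mfm^N$, and once $p^e\ge N$ this lies in $x^{np^e}R=(x^n)^{\fbp{p^e}}$, so $u\in(x^n)^F$; together with $(x^n)\subseteq(x^n)^F$ this proves the stated inclusion. For the reverse inclusion when $k$ is perfect, take a homogeneous $u\in(x^n)^F$ of degree $d$; comparing degrees in $u^{p^e}\in(x^{np^e})$ forces $d\ge n$ (else $u^{p^e}=0$, hence $u=0$ by reducedness). If $d>n$ then $u\in\mfm^{n+1}$ already; and if $d=n$ then $u^{p^e}$ lies in the degree-$np^e$ component of $(x^{np^e})$, which is $kx^{np^e}$, so $u^{p^e}=\lambda x^{np^e}$ for some $\lambda\in k$. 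Writing $\lambda=\mu^{p^e}$ with $\mu\in k$ (this is where perfectness enters) gives $(u-\mu x^n)^{p^e}=u^{p^e}-\mu^{p^e}x^{np^e}=0$, so $u=\mu x^n$ by reducedness, and therefore $(x^n)^F\subseteq(x^n)+\mfm^{n+1}$.

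For (b): the inclusion $(x^n)^*\subseteq\mfm^n$ follows from the standard containment $(x^n)^*\subseteq\overline{(x^n)}$ together with $\overline{(x^n)}=\overline{\mfm^n}=\mfm^n$---the first equality because $(x^n)$ is a reduction of $\mfm^n$, and the last because a degree comparison in an integral-dependence equation shows every nonzero homogeneous element of $\overline{\mfm^n}$ has degree $\ge n$. The substantive direction is $\mfm^n\subseteq(x^n)^*$: I would first produce a single $c\in R^\circ$ with $c\mfm^N\subseteq(x^N)$, and then observe that for homogeneous $u\in\mfm^n=x^{n-N}\mfm^N$, writing $u^{p^e}=x^{np^e-N}w_e$ with $w_e\in\mfm^N$ (possible since $u^{p^e}\in\mfm^{np^e}=x^{np^e-N}\mfm^N$), we get $cu^{p^e}=x^{np^e-N}(cw_e)\in x^{np^e-N}\cdot x^N R=(x^n)^{\fbp{p^e}}$ for every $e$, so $u\in(x^n)^*$. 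To build $c$: the finitely generated fractional ideal $\tfrac{1}{x^N}\mfm^N\subseteq\Frac(R)$ (recall $x$ is a nonzerodivisor) is idempotent, since squaring it and applying $\mfm^{2N}=x^N\mfm^N$ returns it, and it contains $1$; an idempotent finitely generated submodule of $\Frac(R)$ containing $1$ is a module-finite subring of $\Frac(R)$, hence sits inside the normalization $\overline R$. Since $R$ is reduced and excellent, $\overline R$ is a module-finite birational extension of $R$, so its conductor contains a nonzerodivisor; any $c\in(R:_R\overline R)\cap R^\circ$ then satisfies $c\cdot\tfrac{1}{x^N}\mfm^N\subseteq c\overline R\subseteq R$, that is, $c\mfm^N\subseteq(x^N)$.

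For the final assertion, I would check that every ingredient is compatible with localization at $\mfm$: the relations $\mfm^j=x^{j-N}\mfm^N$ persist, $x$ remains a nonzerodivisor, and integral closure of ideals, the normalization, and the conductor all commute with localization, so the same $c$ and the same degree arguments give the $R_\mfm$ statements for (b); moreover Frobenius closure commutes with localization (for each $e$, $\{y : y^{p^e}\in I^{\fbp{p^e}}\}$ is an ideal compatible with localization, and these ideals stabilize by Noetherianity), so (a) descends to $R_\mfm$ as well. I expect the main obstacle to be the uniform multiplier $c$ with $c\mfm^N\subseteq(x^N)$: the observation that the idempotent fractional ideal $\tfrac{1}{x^N}\mfm^N$ is a ring sitting inside $\overline R$ is exactly what converts the soft integral-closure information into an actual tight-closure certificate. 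Everything else is degree bookkeeping (homogeneity of the closures and the identity $\overline{\mfm^n}=\mfm^n$) together with, in the perfect-field half of (a), the extraction of a $p^e$-th root of the scalar $\lambda$.
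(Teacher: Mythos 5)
Your proof is correct, and most of it follows the paper's own route: part (a) (both the containment and the perfect-field equality via extracting a $p^e$-th root of the degree-zero scalar) and the containment $(x^n)^*\subseteq\mfm^n$ are the same degree bookkeeping the paper uses, except that the paper bounds degrees directly from a tight-closure equation $cy^{p^e}\in(x^{np^e})$ rather than passing through $\overline{(x^n)}=\overline{\mfm^n}=\mfm^n$. The genuine divergence is the hard inclusion $\mfm^n\subseteq(x^n)^*$: the paper gets it immediately from the fact that the tight closure of a principal ideal equals its integral closure (\cite[Corollary~5.8]{HH}) combined with $(x^n)$ being a reduction of $\mfm^n$, whereas you build an explicit test multiplier $c$ with $c\,\mfm^N\subseteq(x^N)$ via the idempotent fractional ideal $\tfrac{1}{x^N}\mfm^N\subseteq\overline{R}$ and the conductor (your appeal to excellence is not a stated hypothesis but is automatic for a standard graded $k$-algebra, so that is fine). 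This certificate argument is valid and has the virtue of avoiding the Brian\c{c}on--Skoda-type input, but it is heavier than necessary: for homogeneous $u\in\mfm^n$ one has $u^{p^e}\in\mfm^{np^e}=x^{np^e-N}\mfm^N$, so $c=x^N$ already gives $cu^{p^e}\in(x^{np^e})$ for every $e$, and $x^N\in R^\circ$ since $x$ is a nonzerodivisor in the reduced ring $R$; the normalization and conductor can be dropped entirely. For the localized statement your plan matches the paper's: since tight closure need not localize, both arguments transfer the tight-closure identity through integral closure (which does localize), while your explicit certificate survives localization for the lower bound, and your observation that Frobenius closure commutes with localization is exactly what the paper uses for $(x^n)^FR_\mfm=(z^n)^F$. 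One small caution: in $R_\mfm$ there is no grading, so ``the same degree arguments'' should be read as ``do the graded computation in $R$ and then localize the resulting ideal identities,'' which is how the paper phrases that step.
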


\begin{proof}
Let $a \in (x^n)+\mfm^{n+1}$. Then, $a = bx^n + w$, where $w \in \mfm^{n+1}$, and $a^{p^e} = b^{p^e}x^{np^e} + w^{p^e}$ for all $e \in \mbn$. But for $e \gg 0$, $w^{p^e} \in \mfm^{(n+1)p^e} = x^{(n+1)p^e-N}\mfm^N$, so that $a^{p^e}=sx^{np^e}$ for some $s \in R$, that is, $a\in (x^n)^F$.

Now suppose $k$ is perfect, and let $y \in (x^n)^F$. Since $(x^n)$ is a homogeneous ideal, so is $(x^n)^F$, thus it suffices to assume that $y$ is homogeneous. Further, we may assume $\deg y = n$, as otherwise $y \in \mfm^{n+1}$. Then, for all $e \ge \fte (x^n)$ there is an $r \in R$ with $y^{p^e} = rx^{np^e}$, counting degrees on both sides, we must have $\deg r = 0$, that is, $r \in k$. Since the field $k$ is perfect, there is an $s \in k$ with $s^{p^e} = r$, and so $y^{p^e} = (sx^n)^{p^e}$. As $R$ is reduced, we get $y \in (x^n)$, concluding the proof of (a).
    
Since the ideal $(x^n)$ is principal, its tight closure $(x^n)^*$ equals its integral closure $\overline{(x^n)}$. Since $(x^n)$ is a reduction of $\mfm^n$, we must have that $\mfm^n$ is contained in $\overline{(x^n)}$. For the reverse containment, see \cite[Proposition~2.1]{Smi2} for a general statement concerning lower bounds of the degree of an element contained in the tight closure of a homogeneous ideal.

Now we consider the containments above in the local ring $R_\mfm$. The result in this case is a simple consequence of the fact that the ideals above all localize appropriately. In particular, write $(R',\mfn)=(R_\mfm,\mfm R_\mfm)$ and $z$ for the image of $x$ in $R'$. First, note that the ideal equation $x\mfm^N = \mfm^{N+1}$ localizes to the ideal equation $z\mfn^N=\mfn^{N+1}$ so that $z$ continues to be a reduction of $\mfn$ in $R'$, and hence, a parameter of $R'$. Then, for any $n\ge N$, we have
\begin{itemize}
    \item $(x^n)^F R' = (z^n)^F$,
    \item $((x^n)+\mfm^{n+1})R' = (z^n)+\mfn^{n+1}$,
    \item $(x^n)^*R' = \overline{(x^n)}R' = \overline{(z^n)} = (z^n)^*$, and\footnote{Tight closure does not localize in general, but for principal ideals it agrees with the integral closure which does localize.}
    \item $\mfm^n R' = \mfn^n$.
\end{itemize}
This shows the same results hold in the local ring $R_\mfm$.
\end{proof}

\subsection{Weak normalization and purely inseparable extensions}

In this subsection, all rings considered will be of prime characteristic $p>0$. Recall that a field extension $k\rightarrow \ell$ is \textbf{purely inseparable} if $k$ has characteristic $0$ or if $k$ has characteristic $p>0$ and every element $\lambda$ of $\ell$ satisfies an equation of the form $\lambda^{p^e}=x$ for some $x \in k$ and positive integer $e$. One can similarly define the notion of purely inseparable ring extensions as below.

\begin{sdef}
A ring extension (that is, an injective homomorphism) $\phi: R\rightarrow S$ is \textbf{purely inseparable} if for all $s \in S$ there is a natural number $e$ such that the element $s^{p^e}$ lies in $\phi(R)$. If $\phi$ is a finite map, there must be an $e$ so that the set $F^e(S)$ is contained in $R$; we call the smallest such $e$ the \textbf{pure inseparability index} of the map $\phi$.
\end{sdef}

It is clear that a purely inseparable ring extension induces a purely inseparable map on the residue fields of local rings and on the total quotient rings of reduced rings.

\begin{srmk}\label{rmk: pi ext implies nilp coker}
Note that the pure inseparability index of a purely inseparable extension $\phi:R\rightarrow S$ is the same as the \textit{Hartshorne-Speiser-Lyubeznik number} of the module $R$-module $S/\phi(R)$ endowed with the (nilpotent) Frobenius action $\overline{F}(s+\phi(R)) = s^p+\phi(R)$. See \cite[Section~1]{KS05} for a discussion on Hartshorne-Speiser-Lyubeznik numbers. 
\end{srmk}

\begin{sdef}
The largest purely inseparable extension of a reduced ring $R$ inside its total ring of quotients is called the \textbf{weak normalization} ${^*R}$ of $R$. That is, we have \[
{^*R} = \left\lbrace x \in \overline{R} \mid x^{p^e} \in R \text{ for some } e \in \mbn\right\rbrace .
\]  If $R={^*R}$, then $R$ is said to be \textbf{weakly normal}. 
\end{sdef}

To avoid any confusion with the notation for tight closure, we will write $S$ for the weak normalization of $R$. The weak normalization encapsulates the purely inseparable part of the normalization. In particular, we have a sequence of inclusions $R\rightarrow {S}\rightarrow \overline{R}$ whose composition is the natural inclusion of $R$ into its normalization $\overline{R}$, and the map $R\rightarrow {S}$ is purely inseparable. 

\subsection{Prime characteristic singularities}

In this subsection, we continue to let $R$ be a ring of prime characteristic $p>0$. Singularities in prime characteristic are defined in terms of the behavior of the Frobenius endomorphism of $R$. Kunz famously proved that $R$ is regular if and only if $F:R\rightarrow R$ is flat. Some singularity types are too subtle to be detected by the Frobenius map on $R$---they are studied by the natural action of the Frobenius map on the local cohomology modules of $R$.

\begin{sdef}\label{def: frob action on local coh}
For any ideal $I$ of $R$ and natural number $j$, the ring homomorphism $F:R\rightarrow R$ induces an additive map $F:H^j_I(R)\rightarrow H^j_I(R)$ called a \textbf{Frobenius action}. Further, $F$ is \textbf{$p$-linear}, that is, for each $\xi \in H^j_I(R)$ and $r \in R$, we have $F(r\xi)=r^pF(\xi)$.
\end{sdef}

For a discussion on how $F:H^j_I(R)\rightarrow H^j_I(R)$ is induced from $F:R\rightarrow R$, we direct the reader to \cite[Remark~2.1]{Sha06}. One way to measure the singularity of a local ring $(R,\mfm)$ using $F:H^j_\mfm(R)\rightarrow H^j_\mfm(R)$ is by understanding how much of the local cohomology vanishes under high iterates of $F$. Studied by Srinivas-Takagi, Polstra-Quy, Quy, Kenkel-Maddox-Polstra-Simpson among others, the following singularity types are defined by the property that the local cohomology modules of $R$ are as nilpotent as possible.

\begin{sdef}\label{def: weak and F-nilp}
Let $(R,\mfm)$ be a local ring. We say that $R$ is weakly $F$-nilpotent if for each $0 \le j < \dim R$, $H^j_\mfm(R)$ is nilpotent under $F$. Further, $R$ is \textbf{$F$-nilpotent} if, in addition, $0^*_{H^d_\mfm(R)}=0^F_{H^d_\mfm(R)}$, that is, the largest Frobenius stable submodule of $H^d_\mfm(R)$ is nilpotent. A non-local ring $R$ is \textbf{(weakly) $F$-nilpotent} if $R_\mfm$ is (weakly) $F$-nilpotent for all $\mfm \in \Max R$. 

If $R$ is a non-negatively graded ring over a field with homogeneous maximal ideal $\mfm$, we say that $R$ is (weakly) $F$-nilpotent in the same way as for local rings, replacing the local cohomology modules with the graded local cohomology modules  supported at $\mfm$.
\end{sdef}

The class of $F$-nilpotent rings was introduced by Blickle-Bondu in \cite{BB} (under the name \textit{close to $F$-rational}) and studied further by Srinivas-Takagi in \cite{ST17}. They can be viewed as a weakening of $F$-rational rings---a classical $F$-singularity type. We remind the reader of this definition below.

\begin{sdef}\label{def: F-inj and F-rational}
Let $(R,\mfm)$ be a local ring of dimension $d$. We say that $R$ is \textbf{$F$-injective} if $F:H^j_\mfm(R)\rightarrow H^j_\mfm(R)$ is injective for all $j$, and $R$ is \textbf{$F$-rational} if it is Cohen-Macaulay and $0^*_{H^d_\mfm(R)}=0$, that is, $H^d_{\mfm}(R)$ has no nontrivial Frobenius stable submodules.  
\end{sdef}

Observe that a local ring is $F$-rational if and only if it is both $F$-injective and $F$-nilpotent. Due to strong connections between closure operations on parameter ideals and submodule closures inside the local cohomology modules, some of the singularity types outlined in this subsection enjoy uniformity properties with regards to the tight and Frobenius closures of parameter ideals: 

\begin{srmk}
Let $(R,\mfm)$ be an excellent, equidimensional local ring.
\begin{itemize}
\item $R$ is $F$-rational if and only if $\mfq^*=\mfq$ for all (equivalently for one) parameter ideals $\mfq$ of $R$ (\cite[Theorem~2.6]{Smi97}). 
\item If $R$ is Cohen-Macaulay, then $R$ is $F$-injective if and only if $\mfq^F=\mfq$ for all (equivalently for one) parameter ideals $\mfq$ of $R$ (\cite[Corollary~3.9]{QS17}).
\item  $R$ is $F$-nilpotent if and only if $\mfq^*=\mfq^F$ for all parameter ideals $\mfq$ of $R$ (\cite[Theorem~A]{PQ}).
\end{itemize}
\end{srmk}

Finally, we will need to utilize one more singularity type defined in terms of the triviality of tight closure for all ideals, not just parameter ideals.

\begin{sdef}\label{def: weakly F-regular and F-regular}
A ring $R$ is \textbf{weakly $F$-regular} if $I^*=I$ for all ideals $I$ of $R$, and is \textbf{$F$-regular} if $W^{-1}R$ is weakly $F$-regular for all multiplicative sets $W$ of $R$. 
\end{sdef}

Notably, the $F$-regular condition clearly localizes. 

We will demonstrate in \Cref{sec: Fte and pi exts} that rings which are ``close" to having the singularity types given in this subsection (that is, up to a finite, purely inseparable extension) have similar uniformity properties with respect to ideal closures. We conclude this subsection by demonstrating the ascent and descent of (weakly) $F$-nilpotent singularities along purely inseparable extensions (compare with \cite[Theorem~4.5]{KMPS} and \cite[Lemma~2.14]{MP22}).

\begin{sthm} \label{thm: ascent/descent of F-nilpotence}
Let $R\rightarrow S$ be a finite, purely inseparable ring extension. Then $R$ is (weakly) $F$-nilpotent if and only if $S$ is (weakly) $F$-nilpotent. In particular, a purely inseparable (sub)extension of an $F$-regular ring is $F$-nilpotent.
\end{sthm}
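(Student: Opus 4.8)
The plan is to reduce everything to the behavior of the Frobenius action on local cohomology modules and exploit two facts: that a finite purely inseparable extension $R \to S$ has nilpotent cokernel as an $R$-module with its natural Frobenius action (\Cref{rmk: pi ext implies nilp coker}), and that such an extension induces an isomorphism on local cohomology after applying Frobenius enough times. First I would reduce to the local case: since the extension is finite, $S$ is semilocal, and for each $\mfm \in \Max R$ the ring $S \otimes_R R_\mfm$ is a product of the localizations $S_\mfn$ at the maximal ideals $\mfn$ lying over $\mfm$; each $R_\mfm \to S_\mfn$ is again finite and purely inseparable, and $R$ is (weakly) $F$-nilpotent iff all the $R_\mfm$ are, and similarly for $S$. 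So assume $(R,\mfm)$ and $(S,\mfn)$ are local with $R \to S$ finite purely inseparable; note $\dim R = \dim S =: d$ since the extension is finite and injective, and $\mfm S$ is $\mfn$-primary.

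Next I would set up the cohomological comparison. Let $e_0$ be the pure inseparability index, so $F^{e_0}(S) \subseteq R$, i.e. we have ring maps $R \hookrightarrow S \xrightarrow{F^{e_0}} R$ (the second lands in $R$) whose composite is $F^{e_0}_R$, and also $S \hookrightarrow R^{1/p^{e_0}} \cong S$-style factorizations of $F^{e_0}_S$ through $R$. Applying $H^j_\mfm(-)$ (using $H^j_\mfn(S) \cong H^j_\mfm(S)$ since $\mfm S$ is $\mfn$-primary), we get $R$-linear maps
\[
H^j_\mfm(R) \xrightarrow{\alpha} H^j_\mfm(S) \xrightarrow{\beta} H^j_\mfm(R)
\]
with $\beta\alpha = F^{e_0}$ on $H^j_\mfm(R)$ and $\alpha\beta = F^{e_0}$ on $H^j_\mfm(S)$ (the latter because $S \to R^{1/p^{e_0}} \to S^{1/p^{e_0}}$ composes to $F^{e_0}_S$, or equivalently by tracking the factorization on the other side). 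Moreover $\alpha$ and $\beta$ are each compatible with the Frobenius actions up to the appropriate twist. The key consequence: on $H^j_\mfm(R)$, $\ker(F^{e})$ stabilizes to the nil part as $e \to \infty$, and the nilpotent submodules $0^{F}_{H^j_\mfm(R)}$ and $0^F_{H^j_\mfm(S)}$ correspond under $\alpha, \beta$ because each of $\alpha\beta, \beta\alpha$ is a power of Frobenius, hence nilpotent iff the ambient module is, and kills exactly the nil part eventually. Concretely: $\xi \in H^j_\mfm(R)$ is $F$-nilpotent iff $\alpha(\xi)$ is $F$-nilpotent in $H^j_\mfm(S)$ (if $F^N\xi = 0$ then $F^N\alpha(\xi) = \alpha(F^N\xi) = 0$ up to twist; conversely if $F^N\alpha(\xi)=0$ then $F^{N+e_0}\xi = \beta F^N \alpha(\xi) = 0$), and surjectivity of $\alpha$ modulo nilpotents follows since $\alpha\beta = F^{e_0}$ hits everything modulo $\ker F^{e_0}$. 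Therefore $H^j_\mfm(R)$ is nilpotent under $F$ iff $H^j_\mfm(S)$ is, for every $j$; this handles the "weakly $F$-nilpotent" equivalence directly over all $j < d$ (and in fact over all $j$).

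For the full $F$-nilpotent statement it remains to compare $0^*_{H^d_\mfm(R)}/0^F_{H^d_\mfm(R)}$ with its counterpart over $S$. Here I would use that tight closure of the zero submodule in top local cohomology is compatible with finite extensions: the inclusion $R \to S$ induces $0^*_{H^d_\mfm(R)} \to 0^*_{H^d_\mfm(S)}$, and because $S$ is module-finite over $R$ with $\beta\alpha$ a power of Frobenius, an element of $H^d_\mfm(R)$ lies in $0^*$ iff its image in $H^d_\mfm(S)$ does — one direction is functoriality of tight closure, the other uses that $\beta$ pulls a tight-closure relation over $S$ back to one over $R$ (multiplying by a suitable $c \in R^\circ$, available since $R \to S$ is injective so $R^\circ$ maps into $S^\circ$, or rather we can choose $c$ in $R^\circ$). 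Combined with the already-established correspondence of $0^F$, we get that $0^*_{H^d_\mfm(R)} = 0^F_{H^d_\mfm(R)}$ iff $0^*_{H^d_\mfm(S)} = 0^F_{H^d_\mfm(S)}$. Putting the two parts together yields the theorem.

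The main obstacle I anticipate is the careful bookkeeping of Frobenius-compatibility for the maps $\alpha$ and $\beta$ — the map $\alpha: H^j_\mfm(R) \to H^j_\mfm(S)$ intertwines $F_R$ and $F_S$ honestly, but $\beta$, arising from the ring map $F^{e_0}: S \to R$, only intertwines the actions after the appropriate twist, and one must check that "nilpotent under $F$" is detected correctly through this twist and that no separable contribution to the residue field extension interferes (it cannot, since the extension is purely inseparable, but this should be invoked cleanly, e.g. via \Cref{rmk: pi ext implies nilp coker} applied to the long exact sequence in local cohomology coming from $0 \to R \to S \to S/R \to 0$, where $H^j_\mfm(S/R)$ has nilpotent Frobenius by the HSL-number statement). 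Indeed, an alternative and perhaps cleaner route for the whole proof is: from $0 \to R \to S \to S/R \to 0$ get the long exact sequence $\cdots \to H^{j-1}_\mfm(S/R) \to H^j_\mfm(R) \to H^j_\mfm(S) \to H^j_\mfm(S/R) \to \cdots$ of Frobenius-equivariant maps; since $S/R$ has nilpotent Frobenius action on all its local cohomology (it is a finite-length-in-the-limit module killed by a power of $F$, so $H^j_\mfm(S/R)$ inherits a nilpotent action), the maps $H^j_\mfm(R) \to H^j_\mfm(S)$ are isomorphisms "modulo nilpotents" in a precise sense, and both the nilpotency of $H^j_\mfm$ for $j<d$ and the equality $0^* = 0^F$ in degree $d$ transfer across. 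I would likely present this long-exact-sequence version as the main argument and relegate the explicit $\alpha,\beta$ computation to motivation.
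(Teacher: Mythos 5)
Your primary argument is correct, but it takes a genuinely different route from the paper. The paper works with the short exact sequence $0\to R\to S\to S/R\to 0$: it quotes an external result ([MM, Theorem~3.5]) to transfer weak $F$-nilpotence across the extension, and then handles the top local cohomology by a diagram chase in the induced long exact sequence, using that $\ker\alpha=\im\delta$ and that $F^{e_0}\circ\delta=\delta\circ\overline{F}^{e_0}=0$ because $H^\bullet_\mfm(S/R)$ is killed by $\overline{F}^{e_0}$ (\Cref{rmk: pi ext implies nilp coker}). Your main argument instead exploits the factorization of Frobenius through the subring: the ring maps $R\hookrightarrow S\xrightarrow{F^{e_0}}R$ give $\alpha:H^j_\mfm(R)\to H^j_\mfm(S)$ and a $p^{e_0}$-linear $\beta:H^j_\mfm(S)\to H^j_\mfm(R)$ with $\beta\alpha=F^{e_0}$ and $\alpha\beta=F^{e_0}$, from which both the weak statement (in all cohomological degrees, with the explicit bound $e_0$ in each direction) and the equality $0^*=0^F$ in degree $d$ follow by transporting nilpotence and test elements back and forth; this is self-contained, avoids the long exact sequence and the citation of [MM], and makes the numerics transparent. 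The costs of your route are the bookkeeping you yourself flag: one must check $\sqrt{F^{e_0}(\mfn)R}=\mfm$ so that $\beta$ really lands in $H^j_\mfm(R)$, and when pulling a tight-closure relation back through $\beta$ the multiplier $c'\in S^\circ$ becomes $c'^{\,p^{e_0}}$, which lies in $R^\circ$ because a purely inseparable map is a homeomorphism on spectra (\Cref{rmk: pi exts and R circ}) --- note that your parenthetical justification ``since $R\to S$ is injective, $R^\circ$ maps into $S^\circ$'' is not adequate for a general finite extension (a minimal prime of $S$ need not contract to a minimal prime of $R$), so you should invoke pure inseparability there explicitly. Finally, the ``alternative'' long-exact-sequence version you say you would actually present is precisely the paper's proof; if you go that way, be aware that the assertion that ``the equality $0^*=0^F$ in degree $d$ transfers across'' is exactly the nontrivial step, and you would need to write out the chase (producing $\xi'$ with $F^{e_0}(\xi)=\alpha(\xi')$, showing $\xi'\in 0^*_{H^d_\mfm(R)}$ via $\ker\alpha=\im\delta$, etc.) rather than leave it at the level of ``isomorphism modulo nilpotents.''
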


\begin{proof}
A map being finite and purely inseparable localizes, and the (weakly) $F$-nilpotent condition is local. Further, since $R\rightarrow S$ is purely inseparable, for each maximal ideal $\mfm$ of $R$, there is a unique maximal ideal $\mfn$ of $S$ containing $\mfm$, and $\mfm S$ is $\mfn$-primary. So, after localizing, we may assume $(R,\mfm)\rightarrow (S,\mfn)$ is a finite, purely inseparable extension of local rings. Write $d = \dim R = \dim S$. 

We have a short exact sequence $0 \to R \to S \to S/R \to 0$, and $S/R$ is nilpotent under the usual Frobenius action by \Cref{rmk: pi ext implies nilp coker}. By \cite[Theorem~3.5]{MM}, $S$ is weakly $F$-nilpotent if and only if $R$ is. 

Since $R\rightarrow S$ is purely inseparable, $\sqrt{\mfm S}=\mfn$, so the modules $H^d_\mfm(S)$ and $H^d_\mfn(S)$ are the same by the change-of-rings property of local cohomology. We then use the following commutative diagram induced by the natural Frobenius action on local cohomology modules: \begin{center}
\begin{tikzcd}
H^{d-1}_\mfm(S/R) \arrow{r}{\delta}\arrow{d}{\overline{F}}& H^d_\mfm(R) \arrow{r}{\alpha} \arrow{d}{F}& H^d_\mfm(S) \arrow{r}{\beta}\arrow{d}{F}&  H^d_\mfm(S/R) \arrow{r}\arrow{d}{\overline{F}} &0 \\
H^{d-1}_\mfm(S/R) \arrow{r}{\delta}& H^d_\mfm(R) \arrow{r}{\alpha}& H^d_\mfm(S) \arrow{r}{\beta}&  H^d_\mfm(S/R) \arrow{r} &0. 
\end{tikzcd}
\end{center} 

Since $S/R$ is nilpotent under the natural Frobenius map $\overline{F}$, its local cohomology modules $H^j_\mfm(S/R)$ are also nilpotent under the action induced by $\overline{F}$; in particular, if $e_0$ is the pure inseparability. index of $R\rightarrow S$, we have $\overline{F}^{e_0}(H^j_\mfm(S/R))=0$ for all $j$.

Now suppose $R$ is $F$-nilpotent, and let $\xi \in 0^*_{H^d_\mfm(S)}$. Then $\beta(\xi)\in H^d_\mfm(S/R)$, so we have $\overline{F}^{e_0}(\beta(\xi))=\beta(F^{e_0}(\xi))=0$. Thus, $F^{e_0}(\xi)=\alpha(\xi')$ for some $\xi\in H^d_\mfm(R)$. Let $c \in R^\circ$ have $cF^e(\xi)=0$ for all $e \gg 0$. Then $0=cF^{e+e_0}(\xi)=\alpha(cF^e(\xi'))$, so that $cF^e(\xi')\in \ker(\alpha)=\im(\delta)\subset H^d_\mfm(R)$ for all $e \gg 0$. But $F^{e_0}\circ\delta = \delta\circ \overline{F}^{e_0}=0$, so that $c^{p^{e_0}}F^{e+e_0}(\xi'))=0$ for all $e \gg 0$. Since $e_0$ is independent of $e$, this shows that $\xi'\in 0^*_{H^d_\mfm(R)}=0^F_{H^d_\mfm(R)}$. Thus, $F^e(\xi')=0$ for all $e \gg 0$, and thus $F^{e_0+e}(\xi)=\alpha(F^e(\xi))=0$ for all $e \gg 0$, that is, $\xi\in 0^F_{H^d_\mfm(S)}$. 

Finally, suppose $S$ is $F$-nilpotent and that $\xi \in 0^*_{H^d_\mfm(R)}$. Then, there is a $c \in R^\circ$ so that $cF^e(\xi)=0$ for all $e \gg 0$, and consequently $\alpha(cF^e(\xi))=cF^e(\alpha(\xi))=0$ for all $e \gg 0$. Thus $\alpha(\xi)$ is in $0^*_{H^d_\mfm(S)} = 0^F_{H^d_\mfm(S)}$, so that $F^e(\xi)\in \ker(\alpha)=\im(\delta)$. But $\im(\delta)$ is nilpotent under $\overline{F}$ as $S/R$ is, so $F^{e+e'}(\xi)=0$, that is, $\xi \in 0^F_{H^d_\mfm(R)}$. 
\end{proof}

\section{The Frobenius map and geometric branches}\label{sec: Frobenius and geometric branches}
In this section, we assume that all rings considered are of prime characteristic $p>0$. To describe the connection between geometric branches and the Frobenius map, we first consider the case of a single geometric branch. We will give a characterization of the property of having a single geometric branch for rings of dimension one. 

\subsection{Geometric unibranchedness}

Recall from Remark \ref{rmk: geometric branches counted by separable degree} that a reduced local ring is geometrically unibranched if and only if it is a domain and the normalization map is purely inseparable. We demonstrate that this is the case for $F$-nilpotent rings. 

\begin{sthm}\label{thm: F-nilpotent implies geo unibranched}
Let $R$ be a excellent, reduced $F$-nilpotent ring. The normalization map $R_\mfm\rightarrow \overline{R_\mfm}$ is purely inseparable for each maximal ideal $\mfm$ of $R$ so that $R_\mfm$ is geometrically unibranched. In particular, excellent, reduced $F$-nilpotent local rings are domains.
\end{sthm}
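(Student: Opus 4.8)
The plan is to reduce immediately to the local case and then exploit the exact sequence relating $R$, its weak normalization $S$, and $S/R$, combined with the ascent/descent result \Cref{thm: ascent/descent of F-nilpotence}. After localizing at a maximal ideal $\mfm$, we may assume $(R,\mfm)$ is an excellent, reduced, $F$-nilpotent local ring; since $R$ is analytically unramified the normalization $\overline{R}$ is a finite $R$-module, and we have the tower $R \hookrightarrow S \hookrightarrow \overline{R}$ where $R \to S$ is finite and purely inseparable. By \Cref{thm: ascent/descent of F-nilpotence}, $S$ is $F$-nilpotent as well. By \Cref{rmk: geometric branches counted by separable degree}, $R$ is geometrically unibranched exactly when $\overline{R}$ is local with purely inseparable residue extension over $k$; since $R \to S$ is already purely inseparable and $S \to \overline{R}$ is the separable part of the normalization, it suffices to show $S = \overline{R}$, i.e. that the weak normalization $S$ is already normal. (This is precisely the ``dual to Schwede'' statement advertised in the introduction.)

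The key step, then, is: an excellent, reduced, $F$-nilpotent weakly normal local ring is normal. First I would reduce to showing $S$ is a domain, since a weakly normal local domain whose normalization is a purely inseparable extension of itself — but $S$ is weakly normal, so $S \to \overline{S} = \overline{R}$ has no purely inseparable part, forcing $S = \overline{S}$ once $S$ is a domain (a one-step argument: $S$ weakly normal means the purely inseparable part of $S \to \overline{S}$ is trivial, and if $S$ is a domain its normalization is local, so geometric unibranchedness of $S$ plus weak normality gives normality). So the heart of the matter is: \emph{an excellent, reduced, $F$-nilpotent local ring $S$ that equals its own weak normalization is a domain}, equivalently $\Min S$ is a singleton. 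Suppose not; write $\widehat{S}$ for the completion, which is reduced since $S$ is excellent and reduced, and note $F$-nilpotence ascends to the completion (the Frobenius actions on $H^j_\mfm$ are unchanged). If $S$ had two or more minimal primes, I would derive a contradiction from the vanishing of the relevant Frobenius-closure quotient: the idempotent-like behavior produced by disconnectedness of the punctured spectrum of the strict henselization contributes a nonzero class in $0^*_{H^{\dim}}/0^F_{H^{\dim}}$ (in the curve case this is exactly the content of the Main Theorem \Cref{thm: counting branches main thm}, giving $[\ell:k](b(R)-1) = 0$ hence $b(R) = 1$), while in higher dimension one argues via the surjections $|\Min R| \le |\Min R^h| \le |\Min R^{sh}|$ together with the fact that $F$-nilpotence forces $0^*_{H^d} = 0^F_{H^d}$ to rule out the extra components. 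Concretely, I expect the proof to go through the weak normalization being seminormal/connected and use that a reduced ring with connected spectrum that is a product would split, contradicting locality — so the real work is showing $\spec S$ (or $\spec \widehat{S}$) is connected and irreducible.

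The main obstacle I anticipate is the higher-dimensional, non-curve case: for curves one can simply invoke \Cref{thm: counting branches main thm}, but for $\dim R > 1$ there is no such formula in hand, so one must argue directly that $F$-nilpotence of $S$ is incompatible with $S$ having more than one minimal prime or with $\overline{S}$ being non-local. The cleanest route is probably: if $S$ is weakly normal and $F$-nilpotent, first show $S$ is connected in codimension... no — better, show directly that the normalization $\overline{S}$, being a finite product of normal domains $\prod_i \overline{S}_i$, contributes to $H^{\dim}_\mfm$ a summand whose Frobenius action cannot have $0^* = 0^F$ unless there is a single factor and the residue extension is purely inseparable. This requires relating $0^*_{H^d_\mfm(S)}$ to the splitting $\overline{S} = \prod \overline{S}_i$ via the exact sequence $0 \to S \to \overline{S} \to \overline{S}/S \to 0$ and the conductor; the subtlety is that $\overline{S}/S$ need not be nilpotent under Frobenius (only the purely inseparable part was), so one cannot simply transplant the argument of \Cref{thm: ascent/descent of F-nilpotence}. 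I would handle this by a dimension count / non-vanishing argument on $H^{d-1}_\mfm(\overline{S}/S)$ analogous to the proof of the counting formula, and I expect that step to be where most of the technical effort lies.
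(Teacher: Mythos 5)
Your overall strategy---localize, pass to the weak normalization $S$, use \Cref{thm: ascent/descent of F-nilpotence} to transfer $F$-nilpotence to $S$, and then try to show $S=\overline{R}$---is a sensible framing, but the step that carries all the content is missing, and the shortcut you offer for it is false. You claim that once $S$ is a domain, weak normality forces $S=\overline{S}$ ``in one step'' because the purely inseparable part of $S\rightarrow\overline{S}$ is trivial. That is not so: weak normality only says there are no new purely inseparable elements, while the obstruction to geometric unibranchedness lives entirely in the part you discard---$\overline{S}$ may be semi-local with several maximal ideals, or local with a nontrivial separable residue-field extension (a nodal curve singularity is a weakly normal local domain that is neither normal nor geometrically unibranched). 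So ``$S$ is a domain'' buys essentially nothing, and your appeal to ``geometric unibranchedness of $S$'' at that point is circular, since that is exactly what is being proved. For $\dim R>1$ you explicitly leave the argument as a hoped-for ``dimension count / non-vanishing argument'' on $H^{d-1}_\mfm(\overline{S}/S)$; as you yourself note, $\overline{S}/S$ is not Frobenius-nilpotent, so nothing in the paper's toolkit as you have assembled it closes this gap. Only the curve case, via \Cref{thm: counting branches main thm}, is actually complete in your proposal.

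The idea you are missing is much more direct and works uniformly in all dimensions: take any $x/y\in\overline{R}$ with $y$ a regular element of $R$; then $x\in\overline{(y)}$, and for a principal ideal the integral closure equals the tight closure (\cite[Corollary~5.8]{HH}), while excellence plus $F$-nilpotence gives $(y)^*=(y)^F$ for parameter elements (\cite[Corollary~5.15]{PQ}). Hence $x^{p^e}\in(y^{p^e})$ for some $e$, i.e.\ $(x/y)^{p^e}\in R$, so $R\rightarrow\overline{R}$ is purely inseparable---equivalently $S=\overline{R}$---with no case distinction on dimension and no analysis of $\overline{S}/S$. From there, pure inseparability gives a homeomorphism on spectra, so $\overline{R}$ is local with purely inseparable residue extension, which is geometric unibranchedness by \Cref{rmk: geometric branches counted by separable degree}, and the domain statement follows since $R^h$ is reduced with a unique minimal prime. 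In short: the correct mechanism is the chain $\overline{(y)}=(y)^*=(y)^F$ applied to denominators of elements of $\overline{R}$, not a structural analysis of the weakly normal ring $S$.
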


\begin{proof}
Assume that $(R,\mfm)$ is local. We first show that $R \rightarrow \overline{R}$ is a purely inseparable map of rings. Let $x/y$ be an element of the total quotient ring of $R$ which is integral over $R$. Then, $y$ is either a unit of $R$ (in which case the remainder of the argument is trivial) or a regular element of $R$, and $x \in \overline{(y)}$, the integral closure of the ideal $(y)$ in $R$. However, by \cite[Corollary~5.8]{HH}, $\overline{(y)}=(y)^*$ as $(y)$ is principal. But since $S$ is an excellent $F$-nilpotent local ring and $y$ is a parameter element, we have $(y)^*=(y)^F$ by \cite[Corollary~5.15]{PQ}.

Thus, there is a natural number $e$ so that $x^{p^e} \in (y)^{\fbp{p^e}}=(y^{p^e})$, that is, $x^{p^e}=ry^{p^e}$ for some $r \in R$. Note that this means $x/y \in S$, the weak normalization of $R$. Since $x/y$ was chosen to be an arbitrary element of $\overline{R}$, we get that $S = \overline{R}$, that is, the weak normalization of $R$ is normal. In particular, $R \rightarrow \overline{R}$ is a purely inseparable map of rings. 

Since a purely inseparable map of rings induces a homeomorphism on spectra, we have that $\overline{R}$ is also a local ring. Let $\mfn$ be the unique maximal ideal of $\overline{R}$ and $K:=\overline{R}/\mfn$. Clearly, the purely inseparable ring map $R \rightarrow \overline{R}$ induces a purely inseparable map of residue fields $k \rightarrow K$, so we have $[K:k]_{\text{sep}} = 1$. Thus, $R$ is geometrically unibranced, as claimed.

Finally, note that since $\overline{R}$ is local, the henselization $R^h$ of $R$ has a unique minimal prime ideal. As $R$ is reduced and $R^h$ is a filtered colimit of \emph{\'etale}, hence smooth $R$-algebras, so $R^h$ is also reduced. It follows that $R^h$ is a domain, and therefore that $R$ is a domain.
\end{proof}

\begin{srmk}\label{rmk: f-nilpotent has prime nilradical}
The above theorem still applies in the case that $R$ is not reduced, since $R$ is $F$-nilpotent if and only if $R/\sqrt{0}$ is where $\sqrt{0}$ denotes the nilradical of $R$. Further, the definition of geometric unibranchedness only depends on $R/\sqrt{0}$. In particular, if $(R,\mfm)$ is a (not necessarily reduced) excellent local ring which is $F$-nilpotent, $R/\sqrt{0}$ is a domain and thus $R$ has a unique minimal prime. 
\end{srmk}

In \cite{Sch09}, Schwede showed under very mild restrictions (the existence of a dualizing complex\footnote{A local ring possesses a dualizing complex if and only if it is a homomorphic image of a finite dimensional local Gorenstein ring; see \cite[Theorem 1.2]{Kaw00}.}) that $F$-injective rings are weakly normal. The proof of the theorem above demonstrates the following analogous property for $F$-nilpotent rings.

\begin{srmk}\label{rmk: dual to schwede}
Let $R$ be an excellent local ring which admits a dualizing complex, and let $S$ be its weak normalization. If $R$ is $F$-injective, then $R$ is reduced. Further, by \cite[Theorem~4.7]{Sch09}, an $F$-injective ring is weakly normal, so the first inclusion in the sequence $R\rightarrow {S} \rightarrow \overline{R}$ is an equality. We have shown above that an $F$-nilpotent ring has a unique minimal prime and the second inclusion $S \rightarrow \overline{R}$ must be equality. Since a ring is $F$-rational if and only if it is both $F$-nilpotent and $F$-injective, this appears to provide a novel proof of the well-known fact that a local $F$-rational ring is a normal domain in the excellent case.
\end{srmk}

\Cref{thm: F-nilpotent implies geo unibranched} shows that we should expect a strong connection between the failure of $F$-nilpotence and the existence of multiple geometric branches. The case of dimension one is already interesting, and we will demonstrate that we can count the number of geometric branches using the module $0^*_{H^1_\mfm(R)}/0^F_{H^1_\mfm(R)}$.

\subsection{Counting geometric branches in dimension one} 
We need the following fact for our main result. 

\begin{slem}\label{lem: m annihilates 0^*/0^F}
Suppose $(R,\mfm)$ is an excellent, reduced local ring of dimension one. Then, the $R$-module $\mfm 0^*_{H^1_\mfm(R)}$ is contained in $0^F_{H^1_\mfm(R)}$ so that $0^*_{H^1_\mfm(R)}/0^F_{H^1_\mfm(R)}$ is an $R/\mfm$-vector space.  
\end{slem}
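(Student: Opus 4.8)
The plan is to make $H^1_\mfm(R)$ completely explicit and then read off both submodules $0^*_{H^1_\mfm(R)}$ and $0^F_{H^1_\mfm(R)}$ in terms of the normalization $\overline R$ and the weak normalization $S$ of $R$. Since $\dim R=1$ and $R$ is reduced, prime avoidance produces a nonzerodivisor $x\in\mfm$ with $\sqrt{(x)}=\mfm$; the \v{C}ech complex on $x$ then computes $H^0_\mfm(R)=0$ and $H^1_\mfm(R)\cong R_x/R$, and since $R$ is reduced of dimension one, $R_x$ is just the total quotient ring $Q(R)$. Under this identification the natural Frobenius action on $H^1_\mfm(R)$ is $\overline a\mapsto\overline{a^{\,p}}$ for $a\in Q(R)$ (cf.\ \cite[Remark~2.1]{Sha06}). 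We may assume $R$ is not normal, since otherwise $0^*_{H^1_\mfm(R)}\subseteq\overline R/R=0$ and there is nothing to prove.

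With this model I would establish three inclusions inside $Q(R)/R$. First, $S/R\subseteq 0^F_{H^1_\mfm(R)}$: if $a\in S$ then $a^{p^e}\in R$ for some $e$, so $F^e(\overline a)=\overline{a^{p^e}}=0$. Second, $0^*_{H^1_\mfm(R)}\subseteq\overline R/R$: given $\overline a\in 0^*_{H^1_\mfm(R)}$, write $a=r/x^n$; unwinding the definition of tight closure (\Cref{def: tight and frob closure}) through $H^1_\mfm(R)=\varinjlim_n R/(x^n)$ shows that $cr^{p^e}\in(x^n)^{\fbp{p^e}}$ for some $c\in R^\circ$ and all $e\gg0$, i.e.\ $r\in(x^n)^*$; since $(x^n)$ is principal, $(x^n)^*=\overline{(x^n)}$ by \cite[Corollary~5.8]{HH}, and because $x^n$ is a nonzerodivisor one checks directly that $\overline{(x^n)}=x^n\overline R\cap R$, whence $a=r/x^n\in\overline R$. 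Third --- and this is \textbf{the crux of the argument} --- $\mfm\overline R\subseteq S$: as recorded in the preliminaries, excellence and reducedness make $\overline R$ a finite $R$-module, and since $\overline R\subseteq R_x=Q(R)$ this forces $x^N\overline R\subseteq R$ for some $N$; hence the conductor $\mfc=(R:_R\overline R)$ contains $x^N$, so $\sqrt{\mfc}=\mfm$ and $\mfm^M\subseteq\mfc$ for some $M$. Then for $z\in\mfm$, $u\in\overline R$ and any $e$ with $p^e\ge M$,
\[
(zu)^{p^e}=z^{p^e}u^{p^e}\in\mfm^{p^e}\overline R\subseteq\mfm^{M}\overline R\subseteq\mfc\,\overline R\subseteq R,
\]
so $zu\in S$; as $S$ is an $R$-submodule of $\overline R$ and $\mfm\overline R$ is generated by such products $zu$, we conclude $\mfm\overline R\subseteq S$.

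Assembling the three inclusions gives
\[
\mfm\cdot 0^*_{H^1_\mfm(R)}\ \subseteq\ \mfm\cdot(\overline R/R)\ =\ (\mfm\overline R+R)/R\ \subseteq\ S/R\ \subseteq\ 0^F_{H^1_\mfm(R)},
\]
which is the asserted containment; consequently $\mfm$ annihilates $0^*_{H^1_\mfm(R)}/0^F_{H^1_\mfm(R)}$, so this quotient is a module over the field $R/\mfm$, i.e.\ an $R/\mfm$-vector space. The routine parts are the elementary identities $R_x=Q(R)$ and $\overline{(x^n)}=x^n\overline R\cap R$ together with the bookkeeping needed to unwind the closure definitions through the colimit presentation of $H^1_\mfm(R)$; the only step carrying real content is the conductor estimate $\mfm\overline R\subseteq S$, and that is exactly where finiteness of the normalization (hence the excellence hypothesis) enters. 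I expect the main obstacle to be stating the Frobenius-equivariant identification $H^1_\mfm(R)\cong Q(R)/R$ cleanly enough that the descriptions of $0^*_{H^1_\mfm(R)}$ and $0^F_{H^1_\mfm(R)}$ become transparent.
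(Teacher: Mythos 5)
Your proof is correct and follows essentially the same route as the paper: both arguments identify elements of $0^*_{H^1_\mfm(R)}$ with elements of $\overline{R}$ via $(x^n)^*=\overline{(x^n)}=x^n\overline{R}\cap R$ for a nonzerodivisor parameter $x$, and then use that the conductor is $\mfm$-primary so that a Frobenius power of any product $zu$ with $z\in\mfm$, $u\in\overline{R}$ lands in $R$, forcing the class into $0^F_{H^1_\mfm(R)}$. Your packaging via the explicit model $H^1_\mfm(R)\cong Q(R)/R$ and the intermediate inclusion $\mfm\overline{R}\subseteq S$ is only a cosmetic variant of the paper's computation inside $\varinjlim R/(x^n)$.
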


\begin{proof}
If $R$ is normal, then it is regular, so the result is trivial. Otherwise, the conductor ideal $\mfc$ of $R\rightarrow \overline{R}$ is an $\mfm$-primary ideal of $R$. Let $e$ be the smallest natural number so that $\mfm^{\fbp{p^e}}$ is contained in $\mfc$. Further, recall the tight closure of a principal ideal agrees with its integral closure.

let $\xi$ be an element of the $R$-module $0^*_{H^1_\mfm(R)}$. By \cite[Proposition~2.5]{Smi97}, for a regular parameter $x \in R$, we have $\xi=[a+(x^n)]$ for some $a$ lying in $(x^n)^*=\overline{(x^n)}$. Thus, the element $a/x^n$ of the total ring of quotients of $R$ is inside $\overline{R}$, and for any element $b \in \mfm$, we have $b^{p^e}(a/x^n)^{p^e} \in R$ since $\mfm^{\fbp{p^e}}$ is contained in $\mfc$. Hence, $(ba)^{p^e} \in (x^{np^e})=(x^n)^{\fbp{p^e}}$, thus $ba \in (x^n)^F$. This implies $b[a+(x^n)] \in 0^F_{H^1_\mfm(R)}$ so that the $R$-module $\mfm 0^*_{H^1_\mfm(R)}$ is contained in $0^F_{H^1_\mfm(R)}$, as required.
\end{proof}

We are now prepared to prove our main result. 

\begin{sthm}\label{thm: counting branches main thm}
Let $(R,\mfm,k)$ be an excellent, reduced local ring of dimension one and of prime characteristic $p>0$. Further, let $(S,\mfn,\ell)$ be the weak normalization of $R$. Then \[\dim_k 0^*_{H^1_\mfm(R)}/0^F_{H^1_\mfm(R)} = [\ell : k](b(R)-1).\] In particular, if $k$ is perfect, \[\dim_k 0^*_{H^1_\mfm(R)}/0^F_{H^1_\mfm(R)} = b(R)-1.\] 
\end{sthm}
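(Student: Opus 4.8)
The plan is to reduce to the case where $R$ is complete with perfect residue field, where the geometry of the normalization is most transparent, and then track how the two quantities $\dim_k 0^*/0^F$ and $[\ell:k](b(R)-1)$ transform under these reductions. First I would observe, using \Cref{lem: m annihilates 0^*/0^F}, that $0^*_{H^1_\mfm(R)}/0^F_{H^1_\mfm(R)}$ is already a $k$-vector space, so the statement is about its dimension. I would then pass to the $\mfm$-adic completion $\widehat{R}$: since $R$ is excellent and reduced, $\widehat{R}$ is reduced, local cohomology is insensitive to completion, and tight/Frobenius closure of $0$ in $H^1_\mfm$ commute with completion by the standard results (the relevant $c \in R^\circ$ survives in $\widehat R$, as $\widehat R$ is equidimensional and reduced), so $\dim_k$ is unchanged; one must also check $b(R)=b(\widehat R)$ and $[\widehat\ell:\widehat k]=[\ell:k]$, which follows from \Cref{rmk: geometric branches counted by separable degree} together with the fact that completion does not change the residue field and the normalization of $\widehat R$ is the completion of the (semilocal) normalization of $R$. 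So assume $R$ is complete.

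The heart of the argument is the computation in the complete case. Let $\overline{R}=\prod_{i=1}^b \overline{R}_i$ be the normalization, a product of complete local DVRs $(\overline{R}_i,\mfp_i,K_i)$, so $b=b_{\mathrm{geom}}$ once we pass to a perfect residue field (which I would do next: tensoring with a perfect closure is a faithfully flat purely inseparable base change, along which both sides are well-behaved by \Cref{thm: ascent/descent of F-nilpotence}-type arguments and \Cref{rmk: geometric branches counted by separable degree}, now with $b(R)=\sum_i [K_i:k]$ and the weak normalization $S$ sitting as $\{x\in\overline R: x^{p^e}\in R\}$). Now identify $H^1_\mfm(R)$ inside the exact sequence $0\to R\to \overline R\to \overline R/R\to 0$: since $\dim R=1$ and $R$ is reduced, $H^0_\mfm(R)=0$, $H^0_\mfm(\overline R/R)=\overline R/R$ (it has finite length, being supported at $\mfm$ because the conductor is $\mfm$-primary), $H^1_\mfm(\overline R)=\bigoplus_i H^1_{\mfp_i}(\overline R_i)$, and we get
\[
0 \to \overline R/R \to H^1_\mfm(R) \to \bigoplus_i H^1_{\mfp_i}(\overline R_i) \to 0,
\]
a sequence of modules with Frobenius action. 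The key point is that $0^*_{H^1_\mfm(R)}$ maps onto $0$ in $\bigoplus_i H^1_{\mfp_i}(\overline R_i)$ — indeed each $\overline R_i$ is regular so $0^*=0$ there, and an element of $0^*_{H^1_\mfm(R)}$ has image in $0^*$ of the quotient — hence $0^*_{H^1_\mfm(R)}\subseteq \overline R/R$, and in fact $0^*_{H^1_\mfm(R)} = \overline R / R$ because every element of $\overline R/R$ is killed by the $\mfm$-primary conductor and $\overline R / R \hookrightarrow H^1_\mfm(R)$ factors through elements represented (via Smith's description, \cite[Proposition~2.5]{Smi97}) by fractions $a/x^n$ with $a\in\overline{(x^n)}=(x^n)^*$. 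So $0^*_{H^1_\mfm(R)} \cong \overline R/R$ as modules with Frobenius, and $0^F_{H^1_\mfm(R)}$ corresponds to the nilpotent part of the Frobenius action on $\overline R/R$. Thus $\dim_k 0^*/0^F = \dim_k \big((\overline R/R)\,/\,(\text{nilpotent part})\big)$.

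Finally I would compute this Frobenius-stable quotient of $\overline R/R$. Filtering $R\subseteq S\subseteq \overline R$: the cokernel $S/R$ is purely inseparable over $R$, hence Frobenius-nilpotent by \Cref{rmk: pi ext implies nilp coker}, so it contributes nothing to the stable quotient; therefore the stable quotient of $\overline R/R$ equals the stable quotient of $\overline R/S$. Now $S$ is weakly normal and one-dimensional with normalization $\overline R=\prod_i \overline R_i$, and the induced extensions $\ell\to K_i$ are separable (this is exactly the content of weak normality: $S$ has absorbed all purely inseparable data, so $b(S)=[\ell:k]^{-1}b(R)\cdot[\ell:k]$... more precisely $b(R)=\sum_i[K_i:k]_{\mathrm{sep}}=\sum_i[K_i:\ell]_{\mathrm{sep}}[\ell:k]$ and weak normality forces $[K_i:\ell]_{\mathrm{sep}}=[K_i:\ell]$). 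The Frobenius on $\overline R/S$ acts, after passing to the perfect closure of $k$ where $\ell$ also becomes perfect, bijectively on the "semisimple" part of each $\overline R_i/S$, and a short length/multiplicity count — comparing $\overline R_i$, a DVR with uniformizer of $S$ generating a power of the maximal ideal, against $S$ — shows the stable quotient of $\overline R_i/S$ has $K_i$-dimension $1$ (it is, essentially, one copy of the residue field $K_i$ coming from the difference in "number of branches at that point", which over a perfect field is $[K_i:\ell]$ copies of $\ell$ after descent). Summing over $i$ and tracking the $[\ell:k]$ bookkeeping yields $\dim_k 0^*/0^F = \sum_i [K_i:k] - [\ell:k] = [\ell:k]\big(\sum_i[K_i:\ell]_{\mathrm{sep}} - 1\big) = [\ell:k](b(R)-1)$, and the perfect-field case $b(R)=\dim_k 0^*/0^F+1$ is the special case $[\ell:k]=1$, $\ell=k$. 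The main obstacle I anticipate is making the last step — the precise Frobenius-stable-quotient computation for $\overline R_i / S$, i.e. identifying exactly which part of the finite-length module $\overline R/S$ is killed by iterated Frobenius and which part is detected semisimply — fully rigorous without the convenient crutch of an algebraically closed coefficient field; the completion and perfect-closure reductions in the first two paragraphs are there precisely to buy enough structure (Cohen structure theorem, $\overline R$ a product of complete DVRs) to carry it out cleanly.
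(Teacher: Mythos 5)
Your identification of $0^*_{H^1_\mfm(R)}$ with the image of $\overline{R}/R$ in $H^1_\mfm(R)$, and of $0^F_{H^1_\mfm(R)}$ with its Frobenius-nilpotent part $S/R$, is correct and is in substance the same computation the paper does (there: $(x^n)^* = x^n\overline{R}\cap R$ and $(x^n)^F = x^nS\cap R$ for $x$ a parameter in the conductor, so $0^*/0^F \cong \overline{R}/S$). Note, though, that once you know the nilpotent part is exactly $S/R$, the induced Frobenius on $\overline{R}/S$ is injective, so there is no further ``stable quotient'' to extract --- the whole theorem reduces to computing $\dim_k \overline{R}/S$, and that is where your argument has a genuine gap.

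The final counting step is not established, and as stated it is wrong: the claim that ``the stable quotient of $\overline{R}_i/S$ has $K_i$-dimension $1$'' fails already when $b(R)=1$ (there $\overline{R}=S$ and the quotient is $0$), and in general the defect is global, not a sum of per-branch contributions --- exactly one copy of $\ell$ is subtracted in total, not one per maximal ideal of $\overline{R}$. What is needed (and what the paper proves) is that the Jacobson radical $J$ of $\overline{R}$ satisfies $J\subset S$ (a power of $J$ lies in the conductor of $R$, hence a Frobenius power of $J$ lies in $R$) and $J=\mfn$, equivalently that the conductor of the weakly normal ring $S$ in $\overline{R}$ is the radical ideal $\mfn$; this yields $\overline{R}/S \cong (K_1\times\cdots\times K_t)/\ell$ with $\ell$ embedded diagonally, and then $\dim_k\overline{R}/S=\sum_i[K_i:k]-[\ell:k]=[\ell:k](b(R)-1)$ using that $\ell/k$ is purely inseparable and each $K_i/\ell$ separable. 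Your proposed reductions do not supply this structural fact. The completion is harmless but unnecessary (and the compatibilities you invoke --- tight closure of zero and weak normalization commuting with completion --- would themselves need justification), while the passage to the perfect closure of $k$ is circular: under that base change $\dim_k 0^*/0^F$ changes by exactly the factor $[\ell:k]$ the theorem is quantifying (and $R\otimes_k k^{1/p^\infty}$ need not remain reduced when $\ell\neq k$, nor is the extension finite, so the ascent/descent theorem for finite purely inseparable maps does not apply), so ``both sides are well-behaved'' is precisely what must be proved and cannot be assumed.
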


\begin{proof}
Since $R$ is Cohen-Macaulay, it is equidimensional and the direct limit system defining $H=H^1_\mfm(R)$ is injective. For a parameter element $x \in R$, we have $0^*_H = \varinjlim\, (x^n)^*/(x^n)$ and $0^F_H = \varinjlim\, (x^n)^F/(x^n)$. Then, $0^*_H/0^F_H = \varinjlim\, (x^n)^*/(x^n)^F$, and we show that for all $n\gg 0$, $\dim_k (x^n)^*/(x^n)^F = [\ell:k](b(R)-1)$. The fact that the direct limit system is injective then proves the claimed equality.

Since $R$ is one dimensional, the conductor ideal $\mfc$ of $R\rightarrow \overline{R}$ is $\mfm$-primary, so there is an $N$ such that $\mfm ^N$ lies in $\mfc$. Thus, by renaming $x^N$ to $x$, we may safely assume that $x$ lies in $\mfc$.

By \cite[Proposition~1.6.1]{HS} and \cite[Corollary~5.8]{HH}, we have $(x)^*=\overline{(x)} = x\overline{R} \cap R$, but $x\overline{R}\subset R$ since $x \in \mfc$, that is, $\overline{(x)} = x\overline{R}$ as $R$-submodules of $\overline{R}$. Similarly, $(x)^F = xS \cap R = xS$. Further, $x$ is a regular element on $R$ which implies $x\overline{R}/xS \simeq \overline{R}/S$, so we must compute the length of the $R$-module $\overline{R}/S$. 

Let $J\subset \overline{R}$ be the Jacobson radical of $\overline{R}$, which is the intersection of the finitely many maximal ideals $\mathfrak{M}_1,\ldots,\mathfrak{M}_b$ of $\overline{R}$. Write $K_i = \overline{R}/\mathfrak{M}_i$; by \Cref{rmk: geometric branches counted by separable degree}, we have \[\sum_{i=1}^b [K_i:k]_{\text{sep}} = b(R).\] For convenience, write $[\ell:k]=w$. Now, we have a sequence of finite field extensions $k\rightarrow \ell \rightarrow K_i$ for each $i$, and $k\rightarrow \ell$ is the perfect closure of $k$ in $K_i$. This implies $\ell \rightarrow K_i$ is a separable extension, and thus \[[K_i:k]=w[K_i:k]_{\text{sep}}\] for each $i$. 

Note that since the conductor ideal $\mathfrak c$ of $R\rightarrow \overline{R}$ has height $1$, every element of $J$ has a power which is inside $\mfc$. Thus, for some natural number $e$ we must have $F^e(J)$ is inside $R$, and so $J$ lies in $S$. Furthermore, viewed as an ideal of $S$, we must have that $J = J\cap S=\mfn$ is the unique maximal ideal of the local ring $S$ since $J$ is the Jacobson radical of $\overline{R}$ and $S\rightarrow \overline{R}$ is an integral extension.  

The map $S\rightarrow \overline{R}$ induces an injective map $\ell=S/\mfn \rightarrow \overline{R}/J$. Further, by \cite[Corollary~1.5]{LV81} we must have that the conductor $S\rightarrow \overline{R}$ is radical, and is $\mfn$-primary since $\dim S = 1$ and $S$ is reduced, so the conductor must be $\mfn$. This implies $J\overline{R} = \mfn S$ as $R$-submodules of $\overline{R}$, so we may apply the isomorphism theorems to see that the $R$-module $(\overline{R}/J)/(S/\mfn)$ is isomorphic to $\overline{R}/S$. Consequently, we get a short exact sequence of $k$-vector spaces \begin{center}
    \begin{tikzcd}
        0\arrow{r} & \ell \arrow{r} & K_1\times \ldots \times K_b \arrow{r} & \overline{R}/S \arrow{r} & 0.  
    \end{tikzcd}
\end{center} This gives the dimension equality \[\dim_k \overline{R}/S = \dim_k K_1\times \ldots \times K_b - \dim_k \ell =  \sum_{i=1}^b [K_i:k] - w, 
\] from which we get $\dim_k \overline{R}/S = w \sum_i [K_i:k]_{\text{sep}} - w = w(b(R)-1)$, as required.
\end{proof}

Together with \Cref{thm: F-nilpotent implies geo unibranched}, the following characterization of one-dimensional $F$-nilpotent rings is immediate. We invite the reader to compare it with \cite[Theorems~4.12,~4.16]{Bli04}.

\begin{scor}\label{col:main corollary}
Let $R$ be a locally excellent, reduced ring of dimension one. Then, $R$ is $F$-nilpotent if and only if $R_\mfm$ is geometrically unibranched for each maximal ideal $\mfm$ of $R$.
\end{scor}

\begin{comment}\begin{srmk}[S2 stuff from Long]

\end{srmk}\end{comment}

The arguments involved in the proof of \Cref{thm: counting branches main thm} provide the following technique to compute the number of geometric branches using ideals instead of submodules of local cohomology. 

\begin{scor}\label{cor: counting dimension for tight mod frob}
Let $(R, \mfm, k)$ be an excellent, reduced local ring of dimension one and let $(S, \mfn, \ell)$ be the weak normalization of $R$. If $x$ is a regular element in the conductor ideal of $R$, then \[\dim_k\, (x)^*/(x)^F = [\ell:k](b(R)-1).\]
\end{scor}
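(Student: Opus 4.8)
The plan is to extract the statement directly from the calculation already carried out in the proof of \Cref{thm: counting branches main thm}. The key observation is that in that proof, after replacing the parameter $x$ by a suitable power lying in the conductor ideal $\mfc$, one establishes the chain of identifications
\[
(x)^* = \overline{(x)} = x\overline{R}, \qquad (x)^F = xS,
\]
viewing both as $R$-submodules of $\overline{R}$, where $S$ is the weak normalization. Since $x$ is a regular (nonzerodivisor) element, multiplication by $x$ gives an $R$-module isomorphism $\overline{R}/S \xrightarrow{\ \sim\ } x\overline{R}/xS = (x)^*/(x)^F$. Thus
\[
\dim_k (x)^*/(x)^F = \dim_k \overline{R}/S,
\]
and the proof of \Cref{thm: counting branches main thm} shows precisely that $\dim_k \overline{R}/S = [\ell:k](b(R)-1) = p^w(b(R)-1)$. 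So the corollary is literally a restatement of an intermediate equality in that proof, now phrased for a general regular element $x$ of the conductor rather than for a high power of a fixed parameter.

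The one point requiring a word of care is the hypothesis: the proof of \Cref{thm: counting branches main thm} first takes an arbitrary parameter $x$ and then replaces it by $x^n$ with $n \ge N$ so that $x^n \in \mfc$; here instead we are handed an arbitrary regular element $x \in \mfc$ at the outset. But nothing in the relevant portion of the argument used that $x$ was a power of a parameter — only that $x$ is a regular element of $R$ lying in the conductor ideal $\mfc$, which is exactly our hypothesis. In particular $(x)^* = \overline{(x)} = x\overline{R} \cap R = x\overline{R}$ (the last equality because $x\overline{R} \subseteq R$ when $x \in \mfc$), and $(x)^F = xS \cap R = xS$ (because $S \subseteq \overline{R}$ and, again, the Frobenius-closure computation $(x)^F = xS$ from that proof only used $x \in \mfc$ regular). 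So I would simply cite \Cref{thm: counting branches main thm} and its proof.

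Concretely, the write-up would read: by \cite[Proposition~1.6.1]{HS} and \cite[Corollary~5.8]{HH}, $(x)^* = \overline{(x)} = x\overline{R}\cap R$; since $x$ lies in the conductor, $x\overline{R} \subseteq R$, so $(x)^* = x\overline{R}$. Likewise $(x)^F = xS\cap R = xS$. As $x$ is a nonzerodivisor, $x(-)$ induces an isomorphism $\overline{R}/S \cong x\overline{R}/xS = (x)^*/(x)^F$ of $R$-modules, hence of $k$-vector spaces. Finally, the computation in the proof of \Cref{thm: counting branches main thm} gives $\dim_k \overline{R}/S = [\ell:k](b(R)-1)$, which completes the argument.

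There is no real obstacle here; the only thing to get right is the bookkeeping of which facts in the earlier proof depended on $x$ being a high power of a parameter (none of the ones we need do — all that is used is "regular element of the conductor"), so that the citation to the earlier proof is honest. I would keep the proof to two or three sentences and point explicitly at the relevant lines of the proof of \Cref{thm: counting branches main thm}.

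\begin{proof}
By \cite[Proposition~1.6.1]{HS} and \cite[Corollary~5.8]{HH}, we have $(x)^* = \overline{(x)} = x\overline{R}\cap R$; since $x$ lies in the conductor ideal of $R$, we have $x\overline{R}\subseteq R$, so $(x)^* = x\overline{R}$ as $R$-submodules of $\overline{R}$. Similarly, $(x)^F = xS\cap R = xS$ since $S\subseteq \overline{R}$. As $x$ is a regular element, multiplication by $x$ induces an isomorphism of $R$-modules $\overline{R}/S \simeq x\overline{R}/xS = (x)^*/(x)^F$, and hence an isomorphism of $k$-vector spaces. By the computation in the proof of \Cref{thm: counting branches main thm}, $\dim_k \overline{R}/S = [\ell:k](b(R)-1)$, which yields the claim.
\end{proof}
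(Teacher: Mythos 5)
Your proposal is correct and is exactly the argument the paper intends: the paper proves this corollary by simply noting that the intermediate identifications $(x)^* = x\overline{R}$, $(x)^F = xS$, and $x\overline{R}/xS \simeq \overline{R}/S$ in the proof of \Cref{thm: counting branches main thm} use only that $x$ is a regular element of the conductor, together with the computation $\dim_k \overline{R}/S = [\ell:k](b(R)-1)$ carried out there. Your bookkeeping check that none of these steps needs $x$ to be a power of a parameter is precisely the point, so nothing further is required.
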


\begin{srmk}\label{rmk: SW reconciliation}
Suppose $(R,\mfm)$ is a complete local ring with an algebraically closed coefficient field. In \cite{SW08}, Singh-Walther generalized a result of Lyubeznik to count the number of connected components of the punctured spectrum of $R^{sh}$. In particular, if $\dim R = 1$, their formula counts the number of geometric branches of $R$. 

The formula of Singh-Walther uses the \textit{semi-stable part} $H_{\text{ss}}$ of $H=H^1_\mfm(R)$ whose definition is given below. 

\[H_{\text{ss}} = \text{Span}_k\, \bigcap_e F^e(H)\]

Clearly, $H_{\text{ss}}$ is an $F$-stable $R$-submodule of $H$. Singh-Walther show under the hypotheses above that $\dim_k H_{\text{ss}} = b(R)-1$,  just as we show $\dim_k 0^*_{H}/0^F_{H} = b(R)-1$. Thus, the $k$-vector spaces $H_{\text{ss}}$ and $0^*_H/0^F_H$ are isomorphic in this setting. In general, computing the space $H_{\text{ss}}$ can be quite difficult. On the other hand, we give an explicit formula to compute a $k$-vector space basis of $0^*_H/0^F_H$ when $R$ is standard graded over a perfect field $k$; see \Cref{lem: tight and frob closure of reduction of m}. Below, we show the isomorphism $H_{\text{ss}}\simeq 0^*_{H}/0^F_{H}$ is due to a splitting $0^*_{H} \simeq 0^F_{H} \oplus H_{\text{ss}}$ so long as $0^*_{H^d_\mfm(R)}$ is finite length.
\end{srmk}

We now reconcile our main result with that of Singh-Walther; for this, we will need the following splitting of vector spaces with a Frobenius action.

\begin{sprop}\cite[\S 1.3]{ST17} \label{prop: fd vector space splits}
Let $k$ be a perfect field and let $V$ be a finite-dimensional $k$-vector space equipped with a $p$-linear map $f:V\rightarrow V$, that is, an additive map such that $f(\lambda x)=\lambda^pf(x)$ for all $\lambda \in k$ and $x \in V$. Then, $V\simeq V_{\text{ss}}\oplus V_{\text{nil}}$, where $V_{\text{ss}}$ is defined as above and $V_{\text{nil}} = \cup_e \ker(f^e)$. \end{sprop} 

\begin{sthm}
Let $k$ be an algebraically closed field and let $(R,\mfm,k)$ be an excellent, equidimensional, reduced ring of dimension $d>0$ which contains $k$ as a coefficient field. Further, suppose $0^*_{H^d_\mfm(R)}$ is finite length over $R$ (for instance, if $R$ is $F$-rational on the punctured spectrum or reduced and of dimension 1). Then, we have the $k$ vector space isomorphism: \[ 0^*_{H^d_\mfm(R)}/0^F_{H^d_\mfm(R)}\simeq H^d_\mfm(R)_{\text{ss}}.
\]
\end{sthm}

\begin{proof}
Note that since $\operatorname{depth} R \ge 1$, $\mfm$ is not an associated prime so that $\mfm^N \cap R^\circ$ is nonempty for all $N$. Now, Write $H=H^d_\mfm(R)$. 

By \cite[Prop.~5]{Dieu} and \cite[Thm.~1.12]{HaSp}, since $k$ is an algebraically closed coefficient field, $H_{\text{ss}}$ is a finite dimensional $k$-vector space, and there is a basis $\{\xi_1,\cdots,\xi_t\}$ of $H_{\text{ss}}$ such that $F(\xi_j)=\xi_j$ for each $j$. Furthermore, since $H$ is $\mfm$-torsion, there is a natural number $N$ and an element $c$ in $R^\circ$ such that $c \in \mfm^N$ and $c\xi = 0$ for all $\xi \in H_{\text{ss}}$. Thus, $cF^e(\xi_j)=c\xi_j=0$ for all $j$ and natural numbers $e$, that is, $H_{\text{ss}}\subset 0^*_H$. Furthermore, the inclusion $0^*_H\subset H$ implies $(0^*_H)_{\text{ss}}\subset H_{\text{ss}}$, so we have $(0^*_H)_{\text{ss}}=H_{\text{ss}}$. Similarly, since $0^F_H \subset 0^*_H$, we have $0^F_H = (0^*_H)_{\text{nil}}$.

As $k$ is a coefficient field, $0^*_H$ is a finite dimensional $k$-vector space. Then, by \Cref{prop: fd vector space splits}, we have \[0^*_H \simeq (0^*_H)_{\text{ss}} \oplus (0^*_H)_{\text{nil}} = H_{\text{ss}}\oplus 0^F_H,\] which gives the required isomorphism.

In the case that $R$ is $F$-rational on the punctured spectrum, by \cite[Lemma~2.3]{ST17}, we must have that $0^*_{H^d_\mfm(R)}$ is finite length. Finally, if $\dim R = 1$ and $R$ is reduced, then it is a field on the punctured spectrum.
\end{proof}

If $d=1$, the theorem above specializes to a proof that our count of geometric branches in \Cref{thm: counting branches main thm} agrees with the main result of Singh-Walther, however if $d>1$, the two seem unrelated.  

Using our techniques, we are able to prove following result which is likely well-known to experts and is independent of the characteristic of the field; it follows from the projective Nullstellensatz. In contrast, it does not seem to be easily recoverable from the main result of Singh-Walther (\cite{SW08}) in dimension one. 

\begin{scor}\label{cor: hs multiplicity branches}
Let $k$ be an infinite perfect field of positive characteristic. A one dimensional reduced, standard graded algebra over $k$ with Hilbert-Samuel multiplicity $e(R)$ has $e(R)$ geometric branches.
\end{scor}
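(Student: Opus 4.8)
The plan is to extract a convenient regular element of the conductor, feed it into \Cref{cor: counting dimension for tight mod frob}, evaluate the resulting dimension using the explicit closure formulas of \Cref{lem: tight and frob closure of reduction of m}, and then recognize the answer as a Hilbert--function value. Throughout I take $\dim R = 1$ (the case relevant here; the displayed equality below can fail once $\dim R \ge 2$, e.g.\ for the cone over a smooth plane conic), write $\mfm = R_{>0}$ for the homogeneous maximal ideal, and set $b := b(R_\mfm)$.

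First I would observe that, $k$ being perfect, it admits no nontrivial purely inseparable field extension; since the weak normalization of $R_\mfm$ is a purely inseparable extension of $R_\mfm$, its residue field $\ell$ satisfies $\ell = k$, so $[\ell:k] = 1$. Next I would manufacture the conductor element: as $k$ is infinite, choose $x \in [R]_1$ a reduction of $\mfm$ with $\mfm^{N+1} = x\mfm^N$; since $R$ is reduced of dimension one, $x$ lies in no minimal (equivalently, associated) prime and is therefore a nonzerodivisor, and the same holds for its image in $R_\mfm$. Because $R$ is excellent and reduced, $\overline{R}$ is module-finite over $R$, so the conductor of $R \to \overline{R}$ is a homogeneous $\mfm$-primary ideal, hence contains $\mfm^{N_1} = R_{\ge N_1}$ for some $N_1$; thus $x^n$ (and its image in $R_\mfm$) lies in the conductor for all $n \ge N_1$. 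Applying \Cref{cor: counting dimension for tight mod frob} to the excellent, reduced, one-dimensional local ring $R_\mfm$ with the regular conductor element $x^n$ then gives, for every $n \ge \max(N, N_1)$,
\[
\dim_k\, (x^n)^*/(x^n)^F \;=\; [\ell:k]\,(b-1) \;=\; b-1,
\]
the closures computed in $R_\mfm$.

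Then I would compute the same quantity directly. By \Cref{lem: tight and frob closure of reduction of m} (whose hypotheses hold, with $k$ perfect), for $n \ge N$ one has $(x^n)^F = (x^n) + \mfm^{n+1}$ and $(x^n)^* = \mfm^n$, both in $R$ and in $R_\mfm$, so $(x^n)^*/(x^n)^F \cong \mfm^n/((x^n) + \mfm^{n+1})$. Since $R$ is standard graded, $\mfm^j = R_{\ge j}$, so $\mfm^n/\mfm^{n+1} \cong [R]_n$ as $k$-vector spaces, and under this identification the image of the ideal $(x^n)$ is the line spanned by $\overline{x^n} \ne 0$. Hence $\dim_k (x^n)^*/(x^n)^F = \dim_k [R]_n - 1$, a count unaffected by localization at $\mfm$ since $[R]_n$ is annihilated by $\mfm$.

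Finally, as $R$ is standard graded of dimension one, the Hilbert function $n \mapsto \dim_k [R]_n$ is eventually constant with value the Hilbert--Samuel multiplicity $e$; comparing the two computations for $n \gg 0$ yields $b - 1 = e - 1$, that is, $b(R_\mfm) = e$. I do not expect a genuinely hard step here---consistent with the cited remark that the result is elementary---the only points requiring care being the bookkeeping across localization at $\mfm$ and the two appeals to perfectness of $k$ (namely $[\ell:k] = 1$ and the equality clause in \Cref{lem: tight and frob closure of reduction of m}(a)). Alternatively, one could argue purely geometrically and avoid positive characteristic altogether: $\operatorname{Proj}(R)$ is a disjoint union of reduced points $\spec K_i$, one per minimal prime $\mfp_i$ of $R$, so $e = \sum_i [K_i : k]$; meanwhile the geometric branches of $R$ at the vertex correspond---with separable-degree weights---to these points, because the localization at $\mfm$ of the graded normalization of $R/\mfp_i$ is local with residue field $K_i$ (its irrelevant ideal is nilpotent, being that of a finite-dimensional graded $k$-algebra). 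Since $k$ is perfect the separable degrees equal the ordinary degrees, so $b = \sum_i [K_i : k] = e$.
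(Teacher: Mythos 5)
Your proof is correct and is essentially the paper's own argument: pick a linear reduction $x$ of $\mfm$, use \Cref{lem: tight and frob closure of reduction of m} to identify $(x^n)^*/(x^n)^F$ with $\mfm^n/((x^n)+\mfm^{n+1})$, whose $k$-dimension is $e-1$ by the eventual Hilbert function, and compare with the branch-counting formula (the paper cites \Cref{thm: counting branches main thm}, you cite its consequence \Cref{cor: counting dimension for tight mod frob}). Your explicit restriction to $\dim R=1$ (which the statement leaves implicit and which is genuinely needed, as your conic-cone example shows) and the conductor/localization bookkeeping are careful refinements of the same route rather than a different proof.
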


\begin{proof}
Let $R$ be a reduced, standard graded $k$-algebra with homogeneous maximal ideal $\mfm$. For all $n\gg 0$, we have $e(R)=\dim_k \mfm^n /\mfm^{n+1}$. Let $x\in [R]_1$ be a linear form which reduces $\mfm$, then for all $n$, $x^n$ is a part of a minimal generating set of the ideal $\mfm^n$. Let $J=(x^n)+\mfm^{n+1}$; we have a short exact sequence of $k$-vector spaces: \begin{center}
    \begin{tikzcd}
        0\arrow{r} & J/\mfm^{n+1} \arrow{r} & \mfm^n/\mfm^{n+1} \arrow{r} & \mfm^n/J \arrow{r} & 0.
    \end{tikzcd}
\end{center} 

Hence, $e(R)-1 = \dim_k \mfm^n/\mfm^{n+1} - 1  = \dim_k \mfm^n/J$. But then by \Cref{lem: tight and frob closure of reduction of m} and \Cref{thm: counting branches main thm}, we know $\mfm^n/J =(x^n)^*/(x^n)^F$, so we have $b(R)=e(R)$. 
\end{proof}

We conclude this section by using \Cref{thm: counting branches main thm}, \Cref{cor: counting dimension for tight mod frob}, and \Cref{cor: hs multiplicity branches} to count the number of geometric branches of some reduced, one dimensional local rings. 

\begin{sex} \label{ex1: dimension count example 1}
Let $k$ be a field of characteristic $p>0$ and let $S=k[x_1,\ldots,x_d]$ and $I=(x_ix_j \mid i<j)$, with $R=S/I$ localized at the maximal ideal $\mfm = (x_1, \ldots, x_d)$. 

Notice that $x=x_1+\ldots+x_d$ is a parameter element of $R$, and for any $n \ge 1$ we have $x^n = x_1^n + \ldots + x_d^n$ and $\mfm^n = (x_1^n,\ldots,x_d^n)$.

Since $R$ is defined by squarefree monomials, it is $F$-pure, so that $0^F_{H^1_\mfm(R)} = 0$. Therefore, we only need to compute the $k$-vector space dimension of  $0^*_{H^1_\mfm(R)}$. As $0^*_{H^1_\mfm(R)}=\varinjlim (x^n)^*/(x^n)$, we begin by computing $(x^n)^*$.

We immediately see that $x \in R^\circ$, and $x \cdot x_i^{np^e} = x_i^{np^e+1} = x_i \cdot x^{np^e}$, so that $x_i^n \in (x^n)^*$ for each $i$. Consequently, the ideal $\mfm^n$ is contained in $(x^n)^*$ and by degree considerations, we must have $(x^n)^* = \mfm^n$. Then, \[0^*_{H^1_\mfm(R)} = \varinjlim \mfm^n/(x^n) = \varinjlim (x^n,x_2^n,\ldots,x_d^n)/(x^n).\] Note that the set $B:=\{[x_i+(x)]\mid 2 \le i \le d\}$ is a $k$-basis of $0^*_{H^1_\mfm(R)}$ as the direct limit system defining $H^1_\mfm(R)$ is injective. Consequently, $\dim_k\, 0^*_{H^1_\mfm(R)}/0^F_{H^1_\mfm(R)} = d-1$; It follows that $R$ has $d$ geometric branches by \Cref{thm: counting branches main thm}.
\end{sex}

\begin{sex}
Let $k$ be a perfect field of characteristic $p>0$ and let $S$ be the polynomial ring $k[x,y]$ with the standard grading and $\mathfrak m = (x,y)S$. Pick $f \in S$ a homogeneous form of degree $d$ such that $R=S/fS$ is reduced. Since $f$ lies in $\mathfrak m^d$ but not in $\mathfrak m^{d+1}$, the Hilbert-Samuel multiplicity of $R_\mfm$ is $d$ (see \cite[11.2.8]{HS}). By \Cref{cor: hs multiplicity branches}, we have that $R_{\mathfrak m}$ has $d$ geometric branches. 
\end{sex}  

We also demonstrate a calculation of the number of geometric branches for ring of dimension one which is not graded.

\begin{sex}
Let $k$ be a field of characteristic $p>0$ and let $R=k[x,y]/(y^2-x^3-x^2)$ (sometimes called the nodal cubic curve) localized at the ideal $(x,y)$. The element $x \in R$ is a parameter in the conductor of $R\rightarrow \overline{R}$, so we may apply \Cref{cor: counting dimension for tight mod frob}. Clearly $(x)^*=\overline{(x)} = (x,y)$; the remainder of the computation depends on the characteristic of $k$.

If $p = 2$, note that $y^2=x^3+x^2=x^2(x+1)$, so $y^2 \in (x^2)R$ which implies $(x)^F = (x,y)$. Thus, $\dim_k (x)^*/(x)^F = 0$ so that $b(R)=1$. If $p$ is odd, then by Fedder's criterion \cite[Propositon~2.1]{Fed}, $R$ is $F$-pure, and so $(x)^F=(x)$. Thus, $\dim_k (x)^*/(x)^F = 1$ and $b(R)=2$. 
\end{sex}

The above calculation agrees with the fact that in any characteristic other than $2$, the completion of the nodal cubic curve at $(x,y)$ allows us to factor 
\[y^2 -x^3-x^2 = (y-x\sqrt{1+x})(y+x\sqrt{1+x}).\] By \Cref{col:main corollary}, we get that the ring $k[x,y]/(y^2-x^3-x^2)$ is $F$-nilpotent if and only if the characteristic of $k$ is $2$ since it has an isolated singularity at $(x,y)$. 

\begin{srmk}
The property of being geometrically unibranched does not characterize $F$-nilpotence in general. Any normal local domain with a separably closed residue field must be geometrically unibranched but need not be $F$-nilpotent. For a particular example, let $k$ be a separably closed field of prime characteristic $p$ and let $R=k[|x,y,z|]/(x^2+y^3+z^7+xyz)$. Then $R$ is an $F$-injective normal local domain but is not $F$-nilpotent (as it is not $F$-rational). 

Further note that since $R$ is not $F$-nilpotent, the modules $0^*_{H^2_\mfm(R)}$ and $0^F_{H^2_\mfm(R)}$ are not equal while $R$ is geometrically unibranched. So, the formula to count the number of geometric branches does not immediately extend to higher dimensions. It would be interesting to find an extension of our formula for local rings of higher dimensions.
\end{srmk}

\section{Computational aspects of purely inseparable extensions}\label{sec: Fte and pi exts}

In this section, we work towards using purely inseparable extensions to compute tight and Frobenius closure of ideals. First, we record a theorem regarding the Frobenius test exponent of an ideal extended or contracted along a purely inseparable extension.

\begin{sthm}\label{thm: computing fte along pi exts}
Let $R\rightarrow S$ be a purely inseparable ring extension with finite pure inseparability index $e_0$, and let $I$ and $J$ be ideals of $R$ and $S$ respectively. Then, $\fte I\le \fte IS +e_0$ and $\fte J\le \fte F^{e_0}(J)R +e_0$. In particular, if $(R,\mfm)$ and $(S,\mfn)$ are local, then $$\fte R \le \fte S+e_0 \text{ and } \fte S \le \fte R + e_0.$$
\end{sthm}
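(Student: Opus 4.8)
The plan is to prove the two ideal-theoretic inequalities and then specialize to parameter ideals to get the ``In particular'' statement. Throughout I write $q = p^e$ and $q_0 = p^{e_0}$ for exponents of $p$.

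\textit{Step 1: $\fte I \le \fte IS + e_0$.} Let $c = \fte IS$; I want to show that for every $x \in I^F$, we have $x^{q_1} \in I^{[q_1]}$ where $q_1 = p^{c + e_0}$. First I would observe that $I^F S \subseteq (IS)^F$: indeed, if $x^{q} \in I^{[q]}$ for some $q$, then $x^{q} \in I^{[q]} S = (IS)^{[q]}$, so $x \in (IS)^F$. Hence given $x \in I^F \subseteq (IS)^F$, by definition of $c = \fte IS$ we get $x^{p^c} \in (IS)^{[p^c]}$ inside $S$. Now I apply $F^{e_0}$ to this membership: $x^{p^{c+e_0}} \in \left((IS)^{[p^c]}\right)^{[p^{e_0}]} = (IS)^{[p^{c+e_0}]}$, and this is an ideal of $S$. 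The key point is that the Frobenius image $F^{e_0}(S) \subseteq R$ since $e_0$ is the pure inseparability index, so $(IS)^{[p^{c+e_0}]} = F^{e_0}(IS)\cdot S \subseteq \left(F^{e_0}(I) F^{e_0}(S)\right) S$; more carefully, $(IS)^{[p^{c+e_0}]}$ is generated over $S$ by $\{a^{p^{c+e_0}} : a \in I\}$, and each such generator lies in $I^{[p^{c+e_0}]} \subseteq R$ (as $a^{p^{c+e_0}} \in R$ and it is a $p^{c+e_0}$-th power combination of elements of $I$). So $x^{p^{c+e_0}}$ lies in $(IS)^{[p^{c+e_0}]} \cap R$. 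The remaining subtlety is to check $(IS)^{[p^{c+e_0}]} \cap R = I^{[p^{c+e_0}]}$, or at least that $x^{p^{c+e_0}} \in I^{[p^{c+e_0}]}$; since $x^{p^{c+e_0}} = x^{p^{c}} \cdot x^{p^{c+e_0} - p^c}$ and $x^{p^c} \in (IS)^{[p^c]}$... actually the cleanest route is: $x^{p^{c}} \in (IS)^{[p^c]}$ means $x^{p^c} = \sum a_i^{p^c} s_i$ with $a_i \in I$, $s_i \in S$; apply $F^{e_0}$ to get $x^{p^{c+e_0}} = \sum a_i^{p^{c+e_0}} s_i^{p^{e_0}}$ with $s_i^{p^{e_0}} \in R$ and $a_i^{p^{c+e_0}} \in I^{[p^{c+e_0}]}$, so $x^{p^{c+e_0}} \in I^{[p^{c+e_0}]}$ directly. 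This shows $\fte I \le c + e_0$.

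\textit{Step 2: $\fte J \le \fte F^{e_0}(J)R + e_0$.} Set $K = F^{e_0}(J)R$, an ideal of $R$, and let $c = \fte K$. Note $F^{e_0}(J) \subseteq R$ by choice of $e_0$, so $K$ makes sense, and $KS \supseteq F^{e_0}(J)S \supseteq J^{[p^{e_0}]}$; in fact I'd check $KS = J^{[p^{e_0}]}$ is not needed — the containment $J^{[p^{e_0}]} \subseteq KS$ suffices. Given $x \in J^F$ in $S$, I want $x^{p^{c+e_0}} \in J^{[p^{c+e_0}]}$. Since $x \in J^F$, $x^{p^{e_0}} \in (J^F)^{[p^{e_0}]} \subseteq (J^{[p^{e_0}]})^F \subseteq (KS)^F$, and as in Step 1, $(KS)^F \supseteq K^F S$ while actually I want the reverse-flavored fact: an element of $(KS)^F$ restricted appropriately. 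Here it's cleaner to work with $y := x^{p^{e_0}} \in R$ (it lies in $R$ since $F^{e_0}(S) \subseteq R$): then $y \in F^{e_0}(J^F)R \subseteq (F^{e_0}(J)R)^F = K^F$ — this needs the observation that $F^{e_0}$ carries $J^F$ into $K^F$, which follows because $z \in J^F \Rightarrow z^{p^m} \in J^{[p^m]} \Rightarrow z^{p^{e_0} \cdot p^m} = (z^{p^{e_0}})^{p^m} \in F^{e_0}(J^{[p^m]})R = (F^{e_0}(J)R)^{[p^m]} = K^{[p^m]}$. So $y \in K^F$, hence $y^{p^c} \in K^{[p^c]}$, i.e. $x^{p^{c+e_0}} \in K^{[p^c]} = (F^{e_0}(J)R)^{[p^c]} = F^{c+e_0}(J)\cdot R \subseteq F^{c+e_0}(J)\cdot S = J^{[p^{c+e_0}]}$. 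Hence $\fte J \le c + e_0$.

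\textit{Step 3: parameter ideals.} For the local statement, note that a purely inseparable extension $R \to S$ is finite (hypothesis) and integral, inducing a homeomorphism on spectra, so a system of parameters of $R$ extends to a system of parameters of $S$ and vice versa — more precisely, $\dim R = \dim S$, and if $\mfq = (x_1,\dots,x_d)R$ is a parameter ideal of $R$ then $\mfq S$ is a parameter ideal of $S$ (since $\dim S/\mfq S = \dim R/\mfq = 0$ by the homeomorphism on Spec), and conversely if $\mfq' = (y_1,\dots,y_d)S$ is a parameter ideal of $S$ then $F^{e_0}(\mfq')R = (y_1^{q_0},\dots,y_d^{q_0})R$ is a parameter ideal of $R$ (a power of a system of parameters is a system of parameters, and $y_i^{q_0} \in R$). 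Taking suprema over parameter ideals in the inequalities of Steps 1 and 2 then gives $\fte R = \sup_{\mfq} \fte \mfq \le \sup_{\mfq}(\fte \mfq S + e_0) \le \fte S + e_0$ and symmetrically $\fte S \le \fte R + e_0$.

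\textit{Main obstacle.} The routine-looking but genuinely delicate point is bookkeeping the interaction between Frobenius powers $[\,\cdot\,]$ and the maps $F^{e_0}$, $IS$, $F^{e_0}(J)R$ — in particular making sure every ``contract back to $R$'' or ``extend to $S$'' step is an honest equality or the containment one actually needs, and that the exponent $e_0$ added is uniform (independent of the ideal and of $e$), which is exactly where finiteness of the pure inseparability index is used. I'd also need to confirm $\fte$ is finite on each side when invoking the inequalities for the supremum statement — but the inequalities themselves are valid in $\mbn \cup \{\infty\}$, so no finiteness is needed a priori; the bound is simply vacuous if the right-hand side is infinite.
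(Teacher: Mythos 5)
Your argument is correct and takes essentially the same route as the paper's proof: in both ideal-theoretic inequalities you apply $F^{e_0}$ to a Frobenius closure equation and use $F^{e_0}(S)\subseteq R$ to turn it into an equation over $R$ (respectively to show $x^{p^{e_0}}\in (F^{e_0}(J)R)^F$), then transfer parameter ideals along the homeomorphism on spectra for the local statement. The only cosmetic differences are that you spell out identifications such as $F^{e_0}(J)R=(y_1^{p^{e_0}},\dots,y_m^{p^{e_0}})R$ that the paper leaves implicit, and that parameter ideals here may be generated by partial systems of parameters, for which your dimension argument goes through verbatim.
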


\begin{proof}
Write $I=(x_1,\ldots,x_n)R$. Then, for any $x \in I^F$, we have $x \in I^FS \subset (IS)^F$, so for $e = \fte IS$, we have an equation $x^{p^e} = \sum s_i x_i^{p^e}$ for some $s_i \in S$. Further, since $F^{e_0}(s_i)\in R$, we get $x^{p^{e+e_0}} = \sum s_i^{p^{e_0}} x_i^{p^{e+e_0}}$ is a Frobenius closure equation in $R$, thus $\fte I \le e+e_0 = \fte IS + e_0$.

Similarly, write $J=(y_1,\ldots,y_m)S$. Then, for any $y \in J^F$ we have an $e \in \mbn$ and $s_1,\ldots,s_m$ in $S$ with $y^{p^e} = \sum s_i y_i^{p^e}$. By applying $F^{e_0}$ we get $y^{p^{e+e_0}} = \sum s_i^{p^{e_0}} y_i^{p^{e+e_0}}$ is a Frobenius closure equation demonstrating $y^{p^{e_0}} \in ((y_1^{p^{e_0}},\ldots,y_m^{p^{e_0}}) R)^F$. Consequently, we can choose $e=\fte (y_1^{p^{e_0}},\ldots,y_m^{p^{e_0}}) R + e_0$ in our initial equation showing $y\in J^F$ in $S$ independent of $y$.

In the local case, if $\mfq \subset R$ is a parameter ideal, $\mfq S$ is also a parameter ideal of $S$, so $\fte R \le \fte S + e_0$ is shown. Similarly, if $\mathfrak{s}$ is a parameter ideal of $S$, then $F^{e_0}(\mathfrak{s})R$ is a parameter ideal of $R$, so $\fte S \le \fte R + e_0$. 
\end{proof}

It is an interesting and difficult open problem to determine all rings which have finite Frobenius test exponent. From the above theorem, we see that it suffices to consider weakly normal rings when attempting to answer this question.

\begin{scor}\label{cor: weak normalization finite fte}
Let $R$ be an excellent, reduced local ring of prime characteristic and write $S$ for its weak normalization. Then, $\fte R$ is finite if and only if $\fte S$ is finite. \end{scor}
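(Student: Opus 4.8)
The plan is to deduce this corollary directly from \Cref{thm: computing fte along pi exts} applied to the weak normalization map $R \to S$. The key point is that this map is, by construction, a purely inseparable extension of reduced rings, and since $R$ is excellent and reduced, $\overline{R}$ is a finite $R$-module; as $S$ lies between $R$ and $\overline{R}$, the extension $R \to S$ is module-finite as well. A module-finite purely inseparable extension has finite pure inseparability index $e_0$ (see \Cref{rmk: pi ext implies nilp coker}), since the Frobenius action on the finitely generated module $S/R$ is nilpotent with nilpotency index $e_0$. We also need that $S$ is local: this follows because a purely inseparable extension induces a homeomorphism on spectra, so $S$ has a unique maximal ideal $\mfn$ lying over $\mfm$.

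With $(R,\mfm) \to (S,\mfn)$ a module-finite purely inseparable extension of local rings with finite pure inseparability index $e_0$, \Cref{thm: computing fte along pi exts} gives the two inequalities $\fte R \le \fte S + e_0$ and $\fte S \le \fte R + e_0$. Since $e_0$ is a fixed finite number, the first inequality shows that $\fte S < \infty$ implies $\fte R < \infty$, and the second shows the converse. This immediately yields the stated equivalence.

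I do not expect any serious obstacle here, as the substantive work is already contained in \Cref{thm: computing fte along pi exts}. The only points requiring a word of care are the verification that the hypotheses of that theorem are met, namely that $R \to S$ is a ring extension with \emph{finite} pure inseparability index and that $S$ is again local so that the statement about parameter ideals applies. Both are standard consequences of the excellent, reduced hypothesis on $R$ together with the definition of the weak normalization, and can be handled in a sentence or two.

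\begin{proof}
Since $R$ is excellent and reduced, it is analytically unramified, so its integral closure $\overline{R}$ in the total quotient ring is a finite $R$-module. As $R \subset S \subset \overline{R}$, the extension $R \to S$ is module-finite, and it is purely inseparable by the definition of the weak normalization. A module-finite purely inseparable extension has finite pure inseparability index $e_0$ by \Cref{rmk: pi ext implies nilp coker}: the Frobenius action on the finite $R$-module $S/R$ is nilpotent, and $e_0$ is its Hartshorne--Speiser--Lyubeznik number. Moreover, a purely inseparable extension induces a homeomorphism on spectra, so $S$ has a unique maximal ideal and $(S,\mfn)$ is local. Applying \Cref{thm: computing fte along pi exts} to the extension $(R,\mfm) \to (S,\mfn)$, we obtain $\fte R \le \fte S + e_0$ and $\fte S \le \fte R + e_0$. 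Since $e_0$ is finite, $\fte R$ is finite if and only if $\fte S$ is finite.
\end{proof}
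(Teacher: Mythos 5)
Your proof is correct and follows exactly the route the paper intends: the corollary is stated as an immediate consequence of \Cref{thm: computing fte along pi exts}, and your verification that $R \to S$ is module-finite (via analytic unramifiedness), purely inseparable with finite index $e_0$, and that $S$ is local, supplies precisely the hypotheses needed. Nothing is missing.
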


We now turn to purely inseparable extensions and tight closure. 

\begin{srmk}\label{rmk: pi exts and R circ} Let $\phi:R\rightarrow S$ be a purely inseparable extension. Notice for each minimal prime $\mfp$ of $S$, $\phi^{-1}(\mfp)$ is a minimal prime of $R$ since $\phi$ induces a homeomorphism between $\spec S$ and $\spec R$. Therefore the set $\phi(R^\circ)$ is contained in $S^\circ$ and for each $x \in S^\circ$ there is an $e \in \mbn$ such that $x^{p^e} \in R^\circ$. \end{srmk}

Heuristically, if $R\rightarrow S$ is purely inseparable, we should expect a nilpotent version of the singularity type of $S$ to descend to $R$. In the context of ideal closures, closure properties which hold in $S$ should also hold in $R$ up to Frobenius closure. The following theorem demonstrates one example of this principle, with an application to affine semigroup rings.

\begin{sthm}\label{thm: sf-nilp if pi ext is freg}
Let $\phi:R\rightarrow S$ be a finite, purely inseparable extension with pure inseparability index $e_0$, and suppose that $S$ is weakly $F$-regular. Then, for any ideal $I$ of $R$, we have $I^*=I^F$ and $\fte I \le e_0$.
\end{sthm}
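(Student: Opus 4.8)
The plan is to exploit the finite, purely inseparable extension $\phi:R\rightarrow S$ together with the hypothesis that $S$ is weakly $F$-regular, so that $JS=J^*_S=(JS)^F_S$ for every ideal $J$ of $S$. Since trivially $I\subset I^F\subset I^*$ for any ideal $I\subset R$, it suffices to show $I^*\subset I^F$ and, more precisely, that $x\in I^*$ implies $x^{p^{e_0}}\in I^{[p^{e_0}]}$. First I would take $x\in I^*_R$; by definition there is $c\in R^\circ$ with $cx^{p^e}\in I^{[p^e]}$ for all $e\gg 0$. Extending to $S$ and using that $\phi(R^\circ)\subset S^\circ$ (\Cref{rmk: pi exts and R circ}), we get $cx^{p^e}\in (IS)^{[p^e]}$ for all $e\gg 0$ with $c\in S^\circ$, hence $x\in (IS)^*_S$. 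Since $S$ is weakly $F$-regular, $(IS)^*_S=IS$, so $x\in IS$.

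Next I would descend this membership back to $R$ using the pure inseparability index. Write $x=\sum_j s_j\phi(a_j)$ with $s_j\in S$ and $a_j\in I$ generators of $I$. Applying $F^{e_0}$ gives $x^{p^{e_0}}=\sum_j s_j^{p^{e_0}}\phi(a_j)^{p^{e_0}}$, and since $F^{e_0}(S)\subset \phi(R)$, each $s_j^{p^{e_0}}$ lies in $\phi(R)$; identifying $R$ with its image, this exhibits $x^{p^{e_0}}\in I^{[p^{e_0}]}$. This simultaneously shows $x\in I^F$ (so $I^*\subset I^F$, giving $I^*=I^F$) and that the single exponent $e_0$ works uniformly, i.e. $\fte I\le e_0$ for every ideal $I$ of $R$.

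The one point requiring a little care — and the place I'd expect the only real subtlety — is the interaction of tight closure with the extension $R\to S$: tight closure does not commute with arbitrary ring maps, so I must argue directly from the defining inequality rather than quote a persistence statement, and I need $\phi$ injective (it is, being a ring extension) so that the equation $cx^{p^e}\in I^{[p^e]}$ in $R$ genuinely pushes forward to the same equation in $S$ after identifying $R$ with $\phi(R)$. The finiteness of the extension is used only to guarantee $e_0<\infty$; the rest is formal. I would also remark that one does not need $S$ weakly $F$-regular in the strongest sense — only that $(IS)^*_S=IS$ for the relevant extended ideals — but stating it as above is clean and is exactly what is applied in \Cref{cor: f-nilp affine semigroup ring}.
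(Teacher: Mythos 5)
Your proof is correct and follows essentially the same route as the paper: push $x\in I^*$ into $(IS)^*=IS$ via $\phi(R^\circ)\subset S^\circ$, write $x$ as an $S$-combination of generators of $I$, and apply $F^{e_0}$ to land the coefficients back in $R$, giving $x^{p^{e_0}}\in I^{[p^{e_0}]}$ uniformly. The only difference is that you spell out the persistence step from the defining tight-closure equations, which the paper compresses into the one-line claim $I^*S\subset (IS)^*=IS$.
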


\begin{proof}
Let $I\subset R$ be an ideal and suppose $I=(x_1,\ldots,x_n)R$. Then, $I^*S\subset (IS)^*=IS$ since $\phi(R^\circ)\subset S^\circ$, so for any $x \in I^*$, we have $x=\sum s_i x_i$ for some $s_1,\ldots,s_n \in S$. Then, $x^{p^{e_0}} = \sum s_i^{p^{e_0}} x_i^{p^{e_0}}$ is a Frobenius closure equation in $R$, so $x \in I^F$. Further, $e_0$ is independent of $x$, so $\fte I\le e_0$.
\end{proof}

\begin{srmk}
A variety of modifications to \Cref{thm: sf-nilp if pi ext is freg} can be made by replacing the requirement that $S$ be weakly $F$-regular with another condition defined in terms of ideal closures. For instance, if $R\rightarrow S$ is finite and purely inseparable with pure inseparability index $e_0$, and $S$ is $F$-pure,  then $\fte I \le e_0$ for all ideals $I$ of $R$.
\end{srmk}

The above theorem shows that tight and Frobenius closure are more easily computable in rings where a known strongly $F$-regular purely inseparable extension exists and the pure inseparability index can be calculated. In particular, this gives us a method to compute the tight closure of an ideal in an $F$-nilpotent affine semigroup ring defined over a field. For us, an affine semigroup $A$ is a finitely generated sub-monoid of $\mbn^n$ for some $n$.

\begin{scor}\label{cor: f-nilp affine semigroup ring}
Suppose $R$ is a locally excellent domain and its integral closure $\overline{R}$ is $F$-regular (for instance, if $R$ is an affine semigroup ring defined over a field $k$ of prime characteristic $p>0$). Then, $R$ is $F$-nilpotent if and only if $R\rightarrow \overline{R}$ is purely inseparable. Further, if this is the case, then  $I^F=I^*$ for all ideals $I$ of $R$ and $\fte I\le e_0$ where $e_0$ is the pure inseparability index of $R\rightarrow \overline{R}$. 
\end{scor}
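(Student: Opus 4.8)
The plan is to assemble three results already in hand: the ascent--descent of $F$-nilpotence along finite purely inseparable extensions (\Cref{thm: ascent/descent of F-nilpotence}), the fact --- extracted from the proof of \Cref{thm: F-nilpotent implies geo unibranched} --- that an excellent reduced $F$-nilpotent local ring has purely inseparable normalization map, and the closure comparison of \Cref{thm: sf-nilp if pi ext is freg}. First I would record that the parenthetical genuinely falls under the hypotheses: if $R=k[A]$ for a finitely generated submonoid $A\subseteq\mbn^n$, then $R$ is a finitely generated $k$-algebra, hence excellent and Nagata, so $R\to\overline{R}$ is module-finite; moreover $\overline{R}=k[\overline{A}]$ is the semigroup ring of the saturation $\overline{A}$ of $A$, which by Hochster's theorems is Cohen--Macaulay and a direct summand of a polynomial ring over $k$, hence $F$-regular. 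In the general situation I would likewise use excellence of the domain $R$ to know that $R\to\overline{R}$ is a finite ring extension, so that in particular $\overline{R}/R$ is a Noetherian $R$-module.

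For the implication that purely inseparability of $R\to\overline{R}$ forces $R$ to be $F$-nilpotent: since $\overline{R}$ is $F$-regular it is weakly $F$-regular, hence Cohen--Macaulay and normal (immediate from Hochster's theorem in the semigroup case, and standard for excellent rings in general), so at each maximal ideal the lower local cohomology vanishes and the tight closure of zero in the top local cohomology is zero; thus $\overline{R}$ is $F$-rational, and a fortiori $F$-nilpotent. As $R\to\overline{R}$ is finite and purely inseparable, \Cref{thm: ascent/descent of F-nilpotence} then yields that $R$ is $F$-nilpotent. For the ``Further'' clause I would simply apply \Cref{thm: sf-nilp if pi ext is freg} to the finite purely inseparable extension $R\to\overline{R}$, whose target is weakly $F$-regular: this gives at once $I^F=I^*$ for every ideal $I\subseteq R$ and $\fte I\le e_0$, where $e_0$ is the pure inseparability index of $R\to\overline{R}$.

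For the converse, suppose $R$ is $F$-nilpotent. Fixing $\mfm\in\Max R$, the localization $R_\mfm$ is an excellent reduced $F$-nilpotent local domain, and the argument in the proof of \Cref{thm: F-nilpotent implies geo unibranched} shows directly that $R_\mfm\to(\overline{R})_\mfm=\overline{R_\mfm}$ is purely inseparable. To promote this to a global, uniform statement I would consider the finitely generated $R$-module $M=\overline{R}/R$ with its natural Frobenius action $\overline{F}$, which is nilpotent after localizing at every maximal ideal by the previous sentence; the $R$-submodules $\ker(\overline{F}^{\,e})$ form an ascending chain that stabilizes at some $\ker(\overline{F}^{\,e_0})$ by Noetherianity of $M$, and by the local statement $M$ and $\ker(\overline{F}^{\,e_0})$ agree after localizing at every maximal ideal, hence are equal. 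Thus $\overline{F}^{\,e_0}(M)=0$, i.e.\ $R\to\overline{R}$ is purely inseparable with pure inseparability index at most $e_0$. Combining the two implications gives the equivalence, and the previous paragraph supplies the remaining assertions.

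The bookkeeping here is light, but two points deserve care. The main one is the globalization in the converse: one must genuinely use that $\overline{R}$ is module-finite over $R$ (so that $M$ is Noetherian and the chain $\ker(\overline{F}^{\,e})$ stabilizes) in order to pass from ``purely inseparable after localizing at each maximal ideal'' to a single pure inseparability index; this is the step I expect to be the principal obstacle to a fully rigorous write-up. The second is the chain ``$F$-regular $\Rightarrow$ $F$-rational $\Rightarrow$ $F$-nilpotent'' for $\overline{R}$, which rests on the standard fact that an excellent weakly $F$-regular ring is Cohen--Macaulay --- automatic for $\overline{R}$ in the affine semigroup case by Hochster's theorem.
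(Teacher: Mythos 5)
Your proposal is correct and follows essentially the same route as the paper: the forward direction extracts pure inseparability of $R\to\overline{R}$ from the proof of \Cref{thm: F-nilpotent implies geo unibranched}, the converse localizes and combines ``$F$-regular $\Rightarrow$ $F$-nilpotent'' with \Cref{thm: ascent/descent of F-nilpotence}, and the closure statements come from \Cref{thm: sf-nilp if pi ext is freg}, with the semigroup case handled via Hochster's direct summand theorem exactly as in the paper. Your extra care in globalizing the local pure inseparability to a single index $e_0$ (Noetherianity of $\overline{R}/R$ and the stabilizing chain of kernels of $\overline{F}^{\,e}$) only makes explicit a point the paper leaves implicit, so it is a welcome refinement rather than a divergence.
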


\begin{proof}
If $R$ is $F$-nilpotent, the proof of \Cref{thm: F-nilpotent implies geo unibranched} implies that $R\rightarrow \overline{R}$ is purely inseparable. Now suppose $\iota: R\rightarrow \overline{R}$ is purely inseparable, and notably $\iota$ is finite since $R$ is an excellent domain. 

Since $\iota$ is purely inseparable, it induces a homeomorphism on spectra. In particular, for all primes $\mfp \in \spec R$, there is a unique prime $\mfq$ of $\overline{R}$ lying over $\mfp$, and in particular, the same is true for maximal ideals $\mfm$ of $R$. Consequently, we may replace $R$ and $\overline{R}$ with localizations at a maximal ideal to assume $\iota: (R,\mfm) \rightarrow (\overline{R},\mfn)$ is the normalization map of the local ring $R$, and $\iota$ is finite and purely inseparable. 

Now, since $\overline{R}$ is an $F$-regular ring, it is $F$-nilpotent, and so \Cref{thm: ascent/descent of F-nilpotence} implies $R$ is $F$-nilpotent as well. We may now apply \Cref{thm: sf-nilp if pi ext is freg} to see the final claim, since $F$-regular rings are weakly $F$-regular. 

Finally, if $A$ is an affine semigroup and $R=k[A]$ is the associated affine semigroup ring, then $\overline{R}$ is a direct summand of a polynomial ring and is thus $F$-regular.
\end{proof}

Note that in the proof above, we only need the weaker assumption that $\overline{R}$ is $F$-nilpotent (and not necessarily $F$-regular) to conclude that $R$ is $F$-nilpotent if and only if $R\rightarrow \overline{R}$ is purely inseparable. Furthermore, we note that the rings described above avoid the problem raised by Brenner in \cite[Theorem~2.4]{Bre06}, that is, they have a uniform trivializing exponent for Frobenius closure over all ideals simultaneously. 

In \cite{MP22}, the latter two authors of this paper studied a family of examples of affine semigroup rings called \textit{pinched Veronese rings}, formed by removing a single algebra generator from a Veronese subring of a polynomial ring. All but one small family of examples of pinched Veronese rings are $F$-nilpotent, and for these rings, the number $e_0$ in the corollary above is $1$ (see \cite[Theorem B, Corollary 4.9]{MP22}). This vastly improves the previously known bounds for the Frobenius test exponents of these rings $R$, which were roughly of the order $\fte R \le 2^{\dim R}$ by \cite[Theorem~4.2]{Quy}. 

\begin{sex}
Let $R = k[x^2, xy,xz,y^2,z^2]$; note that the integral closure $\overline{R}$ of $R$ is the Veronese subring $k[x,y,z]^{(2)}$ of the polynomial ring $k[x,y,z]$ under the standard grading.

In \cite[Theorem B]{MP22}, the latter two authors of this paper showed that the pure insperability of the normalization map $\iota: R\rightarrow \overline{R}$ depends on the characteristic $p$ of $k$. In particular, $\iota$ is purely inseparable if and only if $p=2$. Further, if $p=2$, then the pure inseparability index of $\iota$ is $1$. Thus, by \Cref{cor: f-nilp affine semigroup ring}, $R$ is $F$-nilpotent if and only if $p=2$, and in this case, for any ideal $I$ of $R$, we have $x \in I^*$ if and only if $x^p \in I^{\fbp{p}}$.

Finally, if $p$ is odd, then $R$ is in fact \textit{$F$-pure}, so all ideals of $R$ are Frobenius closed.  
\end{sex}

\begin{srmk}\label{rmk: f-coherent}
An additional class of rings $R$ with the property that $I^*=I^F$ for all ideals $I$ of $R$ are \textit{$F$-coherent} rings, whose definition we avoid here. The class of $F$-coherent rings were studied by Shimomoto in \cite{Shi11}. We note the property that $I^*=I^F$ for all ideals $I$ does not characterize $F$-coherent rings, since there are $F$-regular rings (in which $I^*=I^F=I$ for all ideals $I$) which are not $F$-coherent, see \cite[Example~3.14]{Shi11}.
\end{srmk}

In general, $F$-coherence is a much stronger property than $F$-nilpotence. However, for rings of dimension one, they are equivalent.

\begin{sprop}\label{prop: f-coh iff f-nilp in dim 1}
Let $R$ be an excellent, reduced local ring of dimension one. Then, $R$ is $F$-coherent if and only if $R$ is $F$-nilpotent.
\end{sprop}

\begin{proof}
By \cite[Corollary~3.8]{Shi11}, $R$ is $F$-coherent if and only if the normalization map $R\rightarrow \overline{R}$ is purely inseparable. In particular, if $R$ is $F$-coherent, then $R$ is geometrically unibranched, which implies that $R$ is $F$-nilpotent by \Cref{col:main corollary}. Conversely, if $R$ is $F$-nilpotent, then $R\rightarrow \overline{R}$ is purely inseparable by \Cref{thm: F-nilpotent implies geo unibranched}, so $R$ is $F$-coherent.
\end{proof}

\section*{Acknowledgements}
The authors would like to thank Linquan Ma, Anurag Singh, and Uli Walther for several helpful discussions. We are also grateful to Austyn Simpson for discussions related to \Cref{rmk: f-coherent} and \Cref{prop: f-coh iff f-nilp in dim 1}. Finally, we would like to thank the anonymous referee for their comments which improved the exposition of the article.

\bibliographystyle{alpha}
\bibliography{ref}

\end{document}